\newcommand{\blue}[1]{{\color{blue}#1}}
\def \C {{\mathbb C}}
\def \N {{\mathbb N}}
\def \d {\,{\rm d}}
\def\re{{\Re e\,}}
\def \sset {{\smallsetminus }}
\def\geq{\geqslant}
\def\le{\leqslant}
\def\ge{\geqslant}
\theoremstyle{plain}
\newtheorem{theorem}{Theorem}[section]
\newtheorem{proposition}{Proposition}[section]
\newtheorem{lemma}[proposition]{Lemma}
\theoremstyle{remark}
\numberwithin{equation}{section}
\numberwithin{equation}{section}
\begin{document}
	\title[\tiny On the distribution of large values of  $|\zeta(1+{\rm i}t)|$]
	{On the distribution of large values of  $|\zeta(1+{\rm i}t)|$}
	\author[\tiny Zikang Dong]{Zikang Dong}
	\address{%
		CNRS LAMA 8050\\
		Laboratoire d'analyse et de math\'ematiques appliqu\'ees\\
		Universit\'e Paris-Est Cr\'eteil\\
		61 avenue du G\'en\'eral de Gaulle\\
		94010 Cr\'eteil Cedex\\
		France
	}
	\email{zikangdong@gmail.com}

	\date{\today}
	
	\subjclass[2020]{11M06, 11N37}
	\keywords{Extreme values,
		Distribution function,
		Riemann zeta function}
	
	\begin {abstract}
	In this article, we study the distribution of large values of the Riemann zeta function on the 1-line.
	We obtain an improved density function concerning large values, holding in the same range as that given by Granville and Soundararajan.

\end{abstract}

\maketitle

\section {\color {blue} \large Introduction}

The study of the value distribution of the Riemann zeta function $\zeta(s)$ can date back to the early twentieth century when Bohr showed that for any $z\in\C^*$ and $\varepsilon>0$, 
there are infinitely many $s$'s with $1<\re s<1+\varepsilon$ such that $\zeta(s)=z$. Later in 1932, Bohr and Jessen \cite{BohrJessen} showed that $\log\zeta(\sigma+{\rm i} t)$ has a continuous distribution on the complex plane for any $\sigma>\frac{1}{2}$.

In this article, we focus on the values of $\zeta(s)$ on the 1-line (i.e. the right borne of the crtical strip). 
The values on the 1-line have much significance. 
For example, the fact that $\zeta(1+{\rm i} t)\neq0$ implies the prime number theorem. The extreme values of $\zeta(1+{\rm i} t)$ has been widely investigated. In 1925, Littlewood \cite{Li25} showed that there exist arbitrarily large $t$ for which
\begin{align*}
	|\zeta(1+{\rm i}t)|\geq \{1+o(1)\}{\rm e}^\gamma\log_2t .
\end{align*}
Here and throughout, we denote by $\gamma$ the Euler constant and by $\log_j$ the $j$-th iterated logarithm.
In 1972, Levinson \cite{Le72} improved the error term from $o(\log_2t)$ to $O(1)$:
$$
|\zeta(1+{\rm i}t)|\geq {\rm e}^\gamma\log_2t +O(1).
$$
In 2006, Granville and Soundararajan \cite{GS06} got a much stronger result
$$
\max_{t\in[1,T]}|\zeta(1+{\rm i}t)|\geq {\rm e}^\gamma \{\log_2T+\log_3T-\log_4T+O(1)\}
$$
holds for sufficiently large $T$. 
Then in 2019, Aistleitner, Mahatab, and Munsch \cite{Ai19} canceled the term $\log_4T$:
\begin{equation}\label{extreme}
\max_{t\in[\sqrt T,T]}|\zeta(1+{\rm i}t)|
\geq {\rm e}^\gamma \{\log_2T+\log_3T+O(1)\}.
\end{equation}
This bound is best possible up to the error term $O(1)$, since in \cite{GS06}, Granville and Soundararajan conjectured that
\begin{equation}\label{conj}
\max_{t\in[T,2T]}|\zeta(1+{\rm i}t)|
= {\rm e}^\gamma \{\log_2T+\log_3T+C_0+1-\log2+o(1)\},
\end{equation}
where $C_0$ is some absolute constant (see \eqref{def:Cj} below). 
This conjecture was based on some analysis on the following distribution function they introduced in \cite{GS06}:
 define for $T>1$,
\begin{equation}\label{def:PhiTtau}
	\Phi_T(\tau)
	:= \frac{1}{T} {\rm meas}\big\{t\in[T,2T]:|\zeta(1+{\rm i}t)|>{\rm e}^{\gamma}\tau\big\}.
\end{equation}
Then they proved the asymptotic formula in the logarithm of the distribution function
\begin{align}\label{distribution}
	\Phi_T(\tau)=\exp\bigg(\!-\frac{2{\rm e}^{\tau-C_0-1}}{\tau}
	\bigg\{1
	+ O\bigg(\frac{1}{\sqrt\tau}+\sqrt{\frac{{\rm e}^{\tau}}{\log T}}\bigg)\bigg\}\bigg)
\end{align}
valid uniformly for $1\le \tau<\log_2T-20$, where
\begin{equation}\label{def:Cj}
C_j 
:= \int_0^2 \bigg(\log\frac{t}{2}\bigg)^j \frac {\log I_0(t)}{t^2} \d t+\int_2^\infty \bigg(\log\frac{t}{2}\bigg)^j
\frac {\log I_0(t)-t}{t^2} \d t	
\end{equation}
and
\begin{equation}\label{def:I0t}
I_0(t) := \sum_{n=0}^{\infty} \frac{(t/2)^n}{(n!)^2}\cdot
\end{equation}
The distribution function (\ref{distribution}) describes the frequency with which each large value is attained. Obviously, the maximum of the range of $\tau$ is much less than the large value (\ref{extreme}). However, if (\ref{distribution}) were to persist to the end of the viable range, then we could get (\ref{conj}). 

The method to prove (\ref{distribution}) was also ajusted to apply to the distribution of values on the 1-line of other $L$-functions. Also in \cite{GS06}, Granville and Soundararajan showed that the distribution of the Dirichlet $L$-functions in the character-aspect has the same form as (\ref{distribution}). This result can be used to study the distribution of large character sums, see \cite{BG13} and \cite{BGGK}. 
In 2003, Granville and Soundararajana \cite{GS03} established the distribution of the Dirichlet $L$-functions 
of quadratic characters $L(1,\chi_d)$, which proves part of Montgomery and Vaughan's conjecture in \cite{Mon77}. In 2007, Wu \cite{Wu2007} improved this result by giving a high order expansion in the exponent of the distribution function. 
In 2008, Liu, Royer and Wu \cite{LRW2008} gave the distribution of a kind of symetric power $L$-functions. 
In 2010, Lamzouri \cite{La2010} studied a generalized $L$-function 
which can cover the results of \cite{GS03, LRW2008}. 
Again concerning the Riemann zeta function, in 2008 Lamzouri \cite{La2008} generalize (\ref{distribution}) to the joint distribution of $\arg\zeta(1+{\rm i} t)$ and $|\zeta(1+{\rm i} t)|$. 

Inspired by the result of Wu \cite{Wu2007}, the aim of this paper is to get an improvement of (\ref{distribution}), 
which presents a higher order expansion in the exponent.
\begin{theorem}\label{thm1}
	There is a sequence of real numbers $\{\mathfrak{a}_j\}_{j\ge 1}$ such that for any integer $J\ge 1$ we have
	\begin{equation*}
		\Phi_T(\tau)=\exp\bigg(\!-\frac{2{\rm e}^{\tau-C_0-1}}{\tau}
		\bigg\{1+\sum_{j=1}^J \frac{\mathfrak{a}_j}{\tau^j}
		+ O_J\bigg(\frac{1}{\tau^{J+1}}+\sqrt{\frac{{\rm e}^{\tau}}{\log T}}\bigg)\bigg\}\bigg)
	\end{equation*}
	uniformly for $T\to\infty$ and $1\le \tau\le\log_2T-20$,
	where $C_0$ is defined as in \eqref{def:Cj}. Moreover, $ \mathfrak{a}_1=2(1+C_0 - C_1)$.
\end{theorem}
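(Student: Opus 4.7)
The plan is to follow and refine the strategy of Granville and Soundararajan \cite{GS06} that produced \eqref{distribution}, by pushing the saddle-point analysis of the underlying random model to arbitrary order in $1/\tau$. First, $\Phi_T(\tau)$ is modelled by the probability
\[
\Psi(\tau) := \mathbb{P}\bigl[ |Z(\mathbb{X})| > \me^{\gamma} \tau \bigr], \qquad Z(\mathbb{X}) := \prod_p \Bigl(1 - \frac{X(p)}{p}\Bigr)^{-1},
\]
where $\{X(p)\}_p$ are independent random variables uniformly distributed on the unit circle. Matching upper and lower bounds for $\Phi_T(\tau)$ in terms of $\Psi(\tau)$ are obtained by comparing the high complex moments of $|\zeta(1+\mi t)|$ on $[T,2T]$ with the corresponding moments of $|Z(\mathbb{X})|$; this is the step that introduces the error term $\sqrt{\me^{\tau}/\log T}$ and imposes the constraint $\tau\le\log_2 T-20$. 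The moment comparison is imported essentially verbatim from \cite{GS06}, so the task reduces to producing a full asymptotic expansion of $\log \Psi(\tau)$ with error $O_J(1/\tau^{J+1})$ relative to the leading factor $2\me^{\tau - C_0 - 1}/\tau$.

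The Laplace transform of the random model factors as an Euler product
\[
L(s) := \mathbb{E}\bigl[|Z(\mathbb{X})|^s\bigr] = \prod_p \mathbb{E}\Bigl|1-\frac{X(p)}{p}\Bigr|^{-s},
\]
whose local factors are hypergeometric functions closely related to $I_0(s/p)$. Mellin-transform analysis of $\log I_0$ — precisely the technique that produces the constants $C_j$ of \eqref{def:Cj} — yields a complete asymptotic expansion of $\log L(s)$ as $s\to\infty$ of the form
\[
\log L(s) = s\log_2 s + (\gamma + C_0)s + \sum_{j=1}^{J} \frac{b_j\, s}{(\log s)^j} + O_J\!\Bigl(\frac{s}{(\log s)^{J+1}}\Bigr),
\]
with explicit coefficients $b_j$ that are polynomial combinations of $C_0, C_1, \dots, C_j$. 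This is the $1$-line analogue of the expansions used by Wu \cite{Wu2007} for $L(1,\chi_d)$ and by Lamzouri \cite{La2010}.

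With $\log L$ known to all orders in $1/\log s$, the saddle-point method applied to the inverse Mellin representation
\[
\Psi(\tau) = \frac{1}{2\pi\mi}\int_{(\kappa)} L(s)\,(\me^{\gamma}\tau)^{-s}\,\frac{\d s}{s}
\]
gives $\log \Psi(\tau) = \log L(\kappa) - \kappa\log(\me^{\gamma}\tau) + O(\log\kappa)$, where $\kappa=\kappa(\tau)$ solves $(\log L)'(\kappa) = \log(\me^{\gamma}\tau)$. Inverting this implicit equation by successive substitution, starting from $\log\kappa\sim\tau$, produces a complete expansion of $\kappa(\tau)$ in negative powers of $\tau$; back-substitution into the saddle-point formula and regrouping deliver the claimed shape for $\log\Phi_T(\tau)$. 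Each $\mathfrak{a}_j$ comes out as an explicit polynomial in $C_0, \dots, C_j$; retaining one more term of the expansion of $\log L$ together with the first non-trivial term of the saddle equation produces, after direct computation, $\mathfrak{a}_1 = 2(1+C_0-C_1)$.

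The main obstacle is the two-stage propagation of errors. Algebraically, one must verify that the saddle-point expansion of $\log\Psi$ is valid term-by-term to arbitrary order, which requires both a full asymptotic expansion of the Euler product $L(s)$ uniformly in $s$ and sharp Gaussian decay for the contour integral away from the saddle. Analytically, one must revisit the moment-matching step of \cite{GS06} to confirm that the error $\sqrt{\me^{\tau}/\log T}$ remains uniform with respect to the new, sharper main term, so that the two-sided inequalities for $\Psi$ translate into matching two-sided inequalities for $\Phi_T$ at the level of the refined exponent.
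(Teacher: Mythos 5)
Your overall architecture (random Euler product model $Z(\mathbb{X})$, full expansion of its Laplace transform $\log L(s)$ via the Mellin-type analysis of $\log I_0$, saddle-point inversion of the relation between $\tau$ and $\kappa$) is a legitimate variant of what the paper does without ever invoking the random model explicitly: the paper expands $D_k(y)=\sum_{n\in S(y)}d_k(n)^2/n^2$ to all orders (Proposition \ref{prop3.1}), computes the $2k$-th moments of the \emph{truncated} products $\zeta(1+{\rm i}t;y)$ (Proposition \ref{prop4.1}), and localizes the moment integral $\int_0^\infty\Phi_T(t;y)t^{2\kappa-1}\,{\rm d}t$ on $[\tau-\varepsilon,\tau+\varepsilon]$ with $\varepsilon\asymp(\log\kappa)^{-J-1}$ by a Rankin/Chebyshev argument rather than a contour-integral saddle point; your expansion of $\log L$ and the inversion of the saddle equation are the analogues of Proposition \ref{prop3.1} and Lemma \ref{lem5.3}, and would indeed yield $\mathfrak{a}_1=2(1+C_0-C_1)$. (Even there, note that two-sided bounds for $\log\Psi(\tau)$ with error $O(\log\kappa)$ require a genuine lower-bound argument near the saddle, not just Chernoff; this is standard, e.g.\ in Lamzouri's work, but not free.)

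The genuine gap is the transfer step, which you propose to ``import essentially verbatim'' from \cite{GS06}. That cannot work as stated. A shift of size $\delta$ in the variable $\tau$ changes $\log\Phi_T(\tau)$ by a relative factor $\asymp\delta$, so to preserve your new term $O_J(\tau^{-J-1})$ inside the braces you must approximate $\zeta(1+{\rm i}t)$ by an Euler product (or otherwise control moments) of length $y\gg(\log T)\,\tau^{2J+2}$; for $\tau$ near $\log_2T$ this exceeds $(\log T)(\log_2T)^4$ as soon as $J\ge2$, whereas Theorem 3 of \cite{GS06} is proved only for $y\le(\log T)(\log_2T)^4$ and their approximation of $\zeta$ by $\zeta(\cdot\,;y)$ carries a matching upper restriction on $y$. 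Thus the comparison between $\Phi_T$ and the model with error $\sqrt{{\rm e}^\tau/\log T}$, uniformly in the full range $\tau\le\log_2T-20$, is precisely what is \emph{not} available off the shelf: one must redo the off-diagonal analysis so that the moment asymptotics hold for $y\le(\log T)(\log_2T)^{A}$ with $A=A(J)$ arbitrary while keeping the same range of $k\asymp{\rm e}^\tau$ (the content of Lemma \ref{lem4.2} and Proposition \ref{prop4.1}), remove the upper bound on $y$ in the approximation lemma (Lemma \ref{lem3}), and only then optimize $y=\min\{(\log T)\tau^{2J+2},(\log T)^2{\rm e}^{-10-\tau}\}$, which is where the error $\sqrt{{\rm e}^\tau/\log T}$ actually comes from. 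You flag this as something ``to confirm,'' but it is the paper's main new technical work, not a routine check; without it your argument proves the theorem only in a restricted range of $\tau$ (or for small $J$).
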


Our main new ingredient for the proof of Theorem \ref{thm1} is Proposition \ref{prop5.1} below,
which gives a better approximation of the distribution function of the short Euler products:
\begin{equation}\label{def:PhiTtauy}
\Phi_T(\tau; y) := \frac{1}{T}  {\rm meas}\big\{t\in[T,2T]:|\zeta(1+{\rm i}t; y)|>{\rm e}^{\gamma}\tau\big\},
\end{equation}	
where
\begin{equation}\label{ded:zetasy}
\zeta(s; y) := \prod_{p\le y} (1-p^{-s})^{-1}.
\end{equation}
For this, it is necessary to improve Theorem 3 of \cite{GS06} 
(see Propositions \ref{prop3.1} and \ref{prop4.1} below).

\vskip 6mm

\section{Preliminary lemmas}

Let $k\ge 1$ be a positive integer. Define $d_k(n)$ by the relation
\begin{equation}\label{def:dk(n)}
\zeta(s)^k = \sum_{n\ge 1} d_k(n)n^{-s}
\qquad
(\re s>1).
\end{equation}
Firstly, we quote the following asymptotic formulae of sums attached to the divisor function $d_k(n)$
and the Bessel function $I_0(t)$ to show their correlation.
	
    	\begin{lemma}\label{lem2.1}
		For any prime $p$ and positive integer $k$, we have
		\begin{align}
			\sum_{\nu\ge 0} \frac{d_k(p^{\nu})^2}{p^{2\nu}}
			& = I_0\bigg(\frac{2k}p\bigg) \exp\bigg\{O\bigg(\frac k{p^2}\bigg)\bigg\},
			\label{lem2.1:1}
			\\\noalign{\vskip 0,5mm}
			\frac{\min(1, \, p/k)}{50} \bigg(1-\frac{1}{p}\bigg)^{-2k}
			& \le \sum_{\nu\ge 0}\frac{d_k(p^{\nu})^2}{p^{2\nu}}
			\le \bigg(1-\frac{1}{p}\bigg)^{-2k},
			\label{lem2.1:2}
		\end{align}
		where $I_0(t)$ is the Bessel function as defined in \eqref{def:I0t}.
	\end{lemma}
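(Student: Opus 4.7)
My proof plan rests on the explicit formula $d_k(p^\nu) = \binom{\nu+k-1}{\nu} = k(k+1)\cdots(k+\nu-1)/\nu!$ and compares the series $S := \sum_{\nu\ge 0}d_k(p^\nu)^2/p^{2\nu}$ against the two natural reference quantities $(1-1/p)^{-2k}$ and $I_0(2k/p)$.

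For the two-sided bound \eqref{lem2.1:2}, the upper bound is immediate by squaring the Euler identity $\sum_\nu d_k(p^\nu)/p^\nu=(1-1/p)^{-k}$: the expansion
\[(1-1/p)^{-2k}=\sum_{\nu,\mu\ge 0}d_k(p^\nu)d_k(p^\mu)/p^{\nu+\mu}\]
has diagonal $\nu=\mu$ equal to $S$ and all off-diagonal terms nonnegative. For the lower bound I would set $a_\nu:=d_k(p^\nu)/p^\nu$ and use the ratio $a_{\nu+1}/a_\nu=(\nu+k)/\{p(\nu+1)\}$, which makes $a_\nu$ unimodal with peak $\nu^*\in\{0,\ldots,\lfloor(k-1)/(p-1)\rfloor\}$. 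Since $S\ge a_{\nu^*}^2$, it suffices to bound $a_{\nu^*}$ from below. In the easy case $p\ge k$ one has $\nu^*=0$ and $a_0=1$; the bound $(1-1/p)^{-2k}\le 16$ (the maximum being attained at $p=k=2$) makes $1\ge(1-1/p)^{-2k}/50$ hold trivially. In the harder case $p<k$, Stirling's formula at $\nu^*\sim k/(p-1)$ yields $a_{\nu^*}\gg\sqrt{p/k}\,(1-1/p)^{-k}$ with an explicit constant, whose square exceeds $(p/k)/50\cdot(1-1/p)^{-2k}$ after numerical bookkeeping.

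For the asymptotic \eqref{lem2.1:1}, I would factor $d_k(p^\nu)=(k^\nu/\nu!)\,r_\nu$ with $r_\nu:=\prod_{j=1}^{\nu-1}(1+j/k)$ (so $r_0=r_1=1$), obtaining
\[S=\sum_\nu r_\nu^2\,(k/p)^{2\nu}/(\nu!)^2,\]
to be compared term-by-term with the corresponding series for $I_0(2k/p)$. From $\log(1+x)\le x$ one has $\log r_\nu^2\le \nu^2/k$, while the ratio test shows that this series is essentially supported on $\nu\lesssim k/p$, where $\nu^2/k\lesssim k/p^2$. Thus on the effective support $r_\nu^2=\exp\{O(k/p^2)\}$ uniformly, and this factor pulls out as a multiplicative $\exp\{O(k/p^2)\}$ correction to $I_0(2k/p)$.

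The main technical obstacle will be controlling the tail $\nu\gg k/p$, where $r_\nu$ grows while $(k/p)^{2\nu}/(\nu!)^2$ decays super-exponentially. A standard saddle-point split at $\nu=Ak/p$ for a sufficiently large absolute constant $A$ should show that the tail contribution is absorbed into $\exp\{O(k/p^2)\}\cdot I_0(2k/p)$, completing the proof of \eqref{lem2.1:1}.
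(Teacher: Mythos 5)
The paper gives no argument for this lemma at all: it is quoted verbatim from \cite[Lemma 4]{GS06}, so your proposal has to stand on its own. Most of it does. For \eqref{lem2.1:2} the upper bound (diagonal of the square of $\sum_\nu d_k(p^\nu)p^{-\nu}=(1-1/p)^{-k}$) is exactly right, and the lower bound via the peak of the negative-binomial weights $a_\nu=d_k(p^\nu)p^{-\nu}$ is workable, since the peak has size about $(p-1)(2\pi kp)^{-1/2}(1-1/p)^{-k}$ and the factor $1/50$ leaves room; if you want to avoid delicate Stirling bookkeeping, Chebyshev (mass $\ge 3/4$ within $2\sigma$ of the mean $k/(p-1)$, $\sigma=\sqrt{kp}/(p-1)$) plus Cauchy--Schwarz gives $S\ge \tfrac{9}{16}(4\sigma+1)^{-1}(1-1/p)^{-2k}$, which covers both cases $p\ge k$ and $p<k$ at once. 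Note also that you have implicitly read \eqref{def:I0t} as the standard Bessel series $\sum_n (t/2)^{2n}/(n!)^2$; that is indeed the intended definition (the displayed exponent $n$ is a typo, as the bound $\log I_0(t)=t+O(\log t)$ used in Proposition \ref{prop3.1} shows), and with it your factorization $S=\sum_\nu r_\nu^2(k/p)^{2\nu}/(\nu!)^2$, $r_\nu=\prod_{j=1}^{\nu-1}(1+j/k)$, and the immediate lower bound $S\ge I_0(2k/p)$ are correct.

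The genuine gap is in the tail step of \eqref{lem2.1:1}. When $k/p^2$ is small the assertion forces $S=I_0(2k/p)\{1+O(k/p^2)\}$, and then the range $\nu>Ak/p$ cannot be ``absorbed'' as an error term. Concretely, take $p\asymp k$, so $x:=k/p\asymp 1$ and $k/p^2\asymp 1/k$: the terms with $\nu>A$ already contribute $\sum_{\nu>A}x^{2\nu}/(\nu!)^2\asymp_A 1$, a fixed positive proportion of $I_0(2x)$, which dwarfs the permitted excess $O(k/p^2)I_0(2x)=O(1/k)$; the same failure occurs whenever $k/p\ll\log(p^2/k)$. Likewise the natural crude bound for the tail, $2t_N\le 2e^{A^2k/p^2}u_N$ with $N=\lceil Ak/p\rceil$, is of constant size there, so no choice of $A$ closes the argument. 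The repair is not to discard the tail but to compare it with the corresponding tail of $I_0$: keep the term-by-term comparison for all $\nu\le\sqrt{k}$, where $1\le r_\nu^2\le e^{\nu^2/k}\le 1+2\nu^2/k$, and control the total excess by the identity $\sum_\nu \nu^2 x^{2\nu}/(\nu!)^2=x^2I_2(2x)+xI_1(2x)\le 2x^2I_0(2x)$, which yields an excess $\ll (x^2/k)I_0(2x)=(k/p^2)I_0(2k/p)$ uniformly in $p$; only $\nu>\sqrt{k}$ should be treated as a genuine tail (there the ratio $t_{\nu+1}/t_\nu=\{(\nu+k)/(p(\nu+1))\}^2$ gives geometric decay in the regime where this precision is needed, while for small $p$, say $k/p^2\ge\log k$, the crude bound $S\le(1-1/p)^{-2k}\le e^{2k/p+2k/p^2}$ against $I_0(2k/p)\gg e^{2k/p}\sqrt{p/k}$ already suffices). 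So your lower bound in \eqref{lem2.1:1} stands, but the upper bound requires this restructuring of the tail analysis rather than a single split at $\nu=Ak/p$.
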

	
	\begin{proof}
		This is \cite[Lemma 4]{GS06}.
	\end{proof}
In order to approximate the Riemann zeta function $\zeta(s)$ by its truncated Euler product $\zeta(s; y)$ 
defined by \eqref{ded:zetasy}, we need the following evaluation for moments of the sum over complex power of primes between two large numbers.

\begin{lemma}\label{lem2.2}
Let $\{b(p)\}_{p\;{\rm primes}}$ be a complex sequence. Then we have
$$
\frac1 T\int_T^{2T} \bigg|\sum_{y\le p\le z} \frac{b(p)}{p^{{\rm i}t}}\bigg|^{2k}\d t
\ll \bigg(k\sum_{y\le p\le z} |b(p)|^2\bigg)^k + \frac{1}{T^{2/3}}\bigg(\sum_{y\le p\le z} |b(p)|\bigg)^{2k}
$$
uniformly for $T\ge 8$, $2\le y\le z\le T^{1/3}$ and all integers $1\le k\le\log T/(3\log z)$,
where the implied constant is absolute.
\end{lemma}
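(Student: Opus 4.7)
The plan is to expand the $2k$-th power, swap the order of summation and integration, and split into a diagonal and an off-diagonal part, following the classical treatment of mean values of prime-supported Dirichlet polynomials. First I would write
\[
\bigg|\sum_{y\le p\le z}\frac{b(p)}{p^{{\rm i}t}}\bigg|^{2k}
= \sum_{\mathbf{p},\mathbf{q}} b(p_1)\cdots b(p_k)\,\overline{b(q_1)\cdots b(q_k)}\bigg(\frac{q_1\cdots q_k}{p_1\cdots p_k}\bigg)^{{\rm i}t},
\]
with $\mathbf{p}=(p_1,\ldots,p_k)$ and $\mathbf{q}=(q_1,\ldots,q_k)$ ranging over $k$-tuples of primes in $[y,z]$, and then integrate term by term. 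Writing $P = p_1\cdots p_k$ and $Q = q_1\cdots q_k$, the inner integral $\int_T^{2T}(Q/P)^{{\rm i}t}\d t$ equals $T$ when $P=Q$ and is $O(1/|\log(Q/P)|)$ otherwise.

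For the diagonal $P=Q$, unique factorization forces the multisets underlying $\mathbf{p}$ and $\mathbf{q}$ to coincide. Encoding such a common multiset by its prime multiplicities $(e_p)$ with $\sum_p e_p = k$, each multiset is realised by $k!/\prod_p e_p!$ ordered $k$-tuples on each side, so the diagonal contribution to $T^{-1}\int_T^{2T}|S(t)|^{2k}\d t$ is
\[
\sum_{(e_p)}\Big(\frac{k!}{\prod_p e_p!}\Big)^2 \prod_p |b(p)|^{2e_p}
\le k!\sum_{(e_p)}\frac{k!}{\prod_p e_p!}\prod_p |b(p)|^{2e_p}
= k!\,\Big(\sum_p |b(p)|^2\Big)^k.
\]
Using $k!\le k^k$, this is $\le \bigl(k\sum_p|b(p)|^2\bigr)^k$, which matches the first term on the right-hand side.

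For the off-diagonal, since $P\ne Q$ are positive integers with $\max(P,Q)\le z^k$, one has $|\log(Q/P)|\ge\log(1+1/\max(P,Q))\gg z^{-k}$, so the off-diagonal contribution is
\[
\ll \frac{z^k}{T}\sum_{\mathbf{p},\mathbf{q}} |b(p_1)|\cdots |b(p_k)|\,|b(q_1)|\cdots |b(q_k)|
= \frac{z^k}{T}\Big(\sum_p |b(p)|\Big)^{2k}.
\]
The hypothesis $k \le \log T/(3\log z)$ gives $z^k \le T^{1/3}$, hence the off-diagonal is $\le T^{-2/3}\bigl(\sum_p |b(p)|\bigr)^{2k}$, which is the second term in the claimed bound. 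The main subtlety, rather than a genuine obstacle, is the quantitative lower bound $|\log(Q/P)|\gg z^{-k}$ and checking that the range of $k$ is tuned precisely so that the loss $z^k/T$ is absorbed into $T^{-2/3}$; both fall out of the hypotheses by design.
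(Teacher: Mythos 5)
Your argument is correct: the diagonal terms are bounded exactly as you say via $\bigl(k!/\prod_p e_p!\bigr)^2\le k!\cdot k!/\prod_p e_p!$ and the multinomial theorem, giving $k!\bigl(\sum_p|b(p)|^2\bigr)^k\le\bigl(k\sum_p|b(p)|^2\bigr)^k$, while the off-diagonal terms use $|\log(Q/P)|\gg z^{-k}$ together with $z^k\le T^{1/3}$ from the hypothesis $k\le\log T/(3\log z)$. The paper itself offers no proof but simply cites Lemma 3 of Granville--Soundararajan, and your write-up is precisely the standard expansion/diagonal--off-diagonal argument underlying that cited lemma, so it is essentially the same approach.
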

	
	\begin{proof}
		This is \cite[Lemma 3]{GS06}.
	\end{proof}

The following lemma is an approximation of $\zeta(s)$ by $\zeta(s; y)$.

\begin{lemma}\label{lem3}
		Let $T\ge 2$ and $y\ge \log T$. Then we have
		$$
		\zeta(1+{\rm i}t)
		= \zeta(1+{\rm i}t; y) \bigg\{1+O\bigg(\frac{\sqrt{(\log T)/y}}{\log_2T}\bigg)\bigg\}
		$$
		for all $t\in [T, 2T]$ except for a set of measure at most $O(T\exp\{-(\log T)/(50\log_2T)\})$.
	\end{lemma}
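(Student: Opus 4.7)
The plan is to compare $\log\zeta(1+{\rm i}t)$ with its truncated Euler analogue $\log\zeta(1+{\rm i}t;y)$ by writing the difference as a Dirichlet polynomial over primes $p>y$, and to control this polynomial by (i) a zero-density estimate that excises the small set of $t$ for which $\zeta$ has a zero too close to $1+{\rm i}t$, and (ii) the high-moment mean-value bound in Lemma \ref{lem2.2} applied on the complement. Concretely, I set $\eta:=1/(100\log_2 T)$ and let $\mathcal{B}$ be the set of $t\in[T,2T]$ such that $\zeta$ has a zero in the rectangle $\{\sigma+{\rm i}\tau:\sigma\ge 1-\eta,\ |\tau-t|\le 2\}$. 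A classical Ingham-type density estimate gives $|\mathcal{B}|\ll T^{3\eta}(\log T)^{5}$, comfortably smaller than the permitted $T\exp\{-(\log T)/(50\log_2 T)\}$; and for $t\notin\mathcal{B}$, the Borel--Carath\'eodory inequality combined with $\re\log\zeta(s)=\log|\zeta(s)|\ll\log_2 T$ controls $|\zeta'/\zeta|$ uniformly in the relevant rectangle.

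Now introduce the shifted line $\sigma_0:=1+1/\log y$. For $\re s\ge\sigma_0$ the Euler product identity yields
\[
\log\zeta(s)-\log\zeta(s;y)\;=\;\sum_{p>y}\frac{1}{p^{s}}+O\!\Big(\frac{1}{y\log y}\Big),
\]
and, for $t\notin\mathcal{B}$, integrating $\zeta'/\zeta(\sigma+{\rm i}t)$ (and the analogous derivative for the truncated product) from $\sigma=\sigma_0$ down to $\sigma=1$ contributes a quantity absorbed into the target error. Cutting at $X:=\exp\{(\log T)^{1/2}\}$, the tail $p>X$ is a harmless $O(X^{-1/\log y})$, and the main piece is $S(t):=\sum_{y<p\le X}p^{-\sigma_0-{\rm i}t}$. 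Applying Lemma \ref{lem2.2} with $b(p)=p^{-\sigma_0}$ and using the prime-number-theorem bound $\sum_{y<p\le X}p^{-2}\ll 1/(y\log y)$, the $2k$-th moment of $S$ is
\[
\frac{1}{T}\int_{T}^{2T}|S(t)|^{2k}\,{\rm d}t\;\ll\;\Big(\frac{Ck}{y\log y}\Big)^{k}
\]
for $k\le \log T/(3\log X)$, the $T^{-2/3}$ term being dominated. A Chebyshev inequality with $k\asymp \log T/\log_2 T$ together with the hypothesis $\log y\ge \log_2 T$ then yields $|S(t)|\ll\sqrt{(\log T)/y}/\log_2 T$ off a set of measure $\ll T\exp\{-(\log T)/(50\log_2 T)\}$; exponentiating the combined estimate gives the lemma.

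The main obstacle is the delicate balancing of the parameters $\eta$, $\sigma_0$, $X$ and $k$ so that the zero-density exceptional set, the Chebyshev exceptional set, and the intrinsic error in the prime sum all simultaneously respect the shapes $T\exp\{-(\log T)/(50\log_2 T)\}$ and $\sqrt{(\log T)/y}/\log_2 T$ demanded by the lemma. In particular, the gain of $1/\log_2 T$ in the target error has to come from the refinement $\sum_{p>y}p^{-2}\ll 1/(y\log y)$ together with $\log y\ge \log_2 T$ rather than from the moment inequality itself, and one must verify that the passage from $\sigma_0+{\rm i}t$ down to $1+{\rm i}t$ along the zero-free rectangle does not erode this factor.
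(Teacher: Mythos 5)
The paper offers no self-contained proof of Lemma \ref{lem3}: it simply cites Proposition 1 of \cite{GS06}, remarking that the upper restriction on $y$ may be dropped. Your sketch therefore amounts to reproving that proposition, and its skeleton (zero-density excision of the $t$ with zeros near the $1$-line, then a prime sum over $p>y$ controlled by Lemma \ref{lem2.2} and Chebyshev) is indeed the right one; but two quantitative steps do not close. The decisive one is the descent from $\sigma_0=1+1/\log y$ to $\sigma=1$. No pointwise bound on $\zeta'/\zeta$ can make $\int_1^{\sigma_0}(\zeta'/\zeta)(\sigma+{\rm i}t)\,{\rm d}\sigma$ and its truncated analogue fall inside the target error: even the unattainable bound $|\zeta'/\zeta|\ll 1$ would only give $O(1/\log y)\asymp O(1/\log_2T)$, while the lemma demands $O(\sqrt{(\log T)/y}/\log_2T)$, smaller by the factor $\sqrt{(\log T)/y}$ exactly in the regime $y\gg\log T$ for which the paper needs the lemma (Section 5 takes $y$ up to $(\log T)(\log_2T)^{O_J(1)}$); and what Borel--Carath\'eodory actually yields in a strip of width $\eta=1/(100\log_2T)$ is only $|\zeta'/\zeta|\ll(\log_2T)^2$ or worse. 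The saving must come from cancellation in the \emph{difference} of the two log-derivatives, i.e.\ from a prime sum over $p>y$ along the whole segment; but that sum is only conditionally convergent at $\sigma=1$, its truncation error there is governed by the nearby zeros, and the standard way to realize the comparison on the $1$-line itself is the Perron-type lemma of \cite{GS06} (their Lemma 2), which in a zero-free rectangle of \emph{constant} width gives $\log\zeta(\sigma+{\rm i}t)=\sum_{n\le X}\Lambda(n)n^{-\sigma-{\rm i}t}/\log n+O(X^{-c}\log T)$ even at $\sigma=1$ and even for $X$ a fixed power of $\log T$. Your sketch has no substitute for this step.

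The second gap is a clash of parameters in the moment step. With your cutoff $X=\exp\{(\log T)^{1/2}\}$, Lemma \ref{lem2.2} only allows $k\le\log T/(3\log X)=(\log T)^{1/2}/3$, so $k\asymp\log T/\log_2T$ is not available; and with $k\ll(\log T)^{1/2}$ the Chebyshev bound cannot beat roughly $T\exp\{-c(\log T)^{1/2}\log_2T\}$, which is larger than the required $T\exp\{-(\log T)/(50\log_2T)\}$. Conversely, since the saving per prime is only a bounded power of $\log_2T$ (one has $y\log y\,V^2\asymp\log y\,\log T/(\log_2T)^2$ for the target level $V$), reaching the stated measure essentially forces $k$ of size $\log T/(\log_2T)^{1+o(1)}$, hence a Dirichlet polynomial of length at most about a fixed power of $\log T$ --- and then the tail $p>X$ can no longer be dismissed at $\sigma_0=1+1/\log y$, because $X^{-1/\log y}$ is merely a constant when $\log X\asymp\log_2T$. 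So the single-cutoff scheme as written cannot meet the error $\sqrt{(\log T)/y}/\log_2T$ and the measure $T\exp\{-(\log T)/(50\log_2T)\}$ simultaneously; repairing it requires working at $\sigma=1$ via the contour lemma in a constant-width zero-free rectangle (where the zero-density exceptional set is $\ll T^{1-\delta}$, amply sufficient) and, if one insists on longer polynomials, splitting the primes into ranges treated with different values of $k$ --- which is precisely the structure of the proof of Proposition 1 in \cite{GS06} that the paper invokes.
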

	
\begin{proof}
This is essentially \cite[Proposition 1]{GS06} while we erase restrictions of the upper bound of $y$.
In fact, the truncated Euler product with larger length would provide a better approximation of the zeta function.
\end{proof}

\vskip 6mm

\section{An asymptotic developement}

The integer $n\ge 1$ is called $y$-friable if the largest prime factor $P(n)$ of $n$ is less than $y$
($P(1)=1$ by convention).
Denote by $S(y)$ the set of $y$-friable integers and define
\begin{equation}\label{def:Dky}
D_k(y) := \sum_{n\in S(y)} d_k(n)^2/n^2.
\end{equation}
The aim of this section is to prove the following proposition, 
which is an improvement of the second part of Theorem 3 in \cite{GS06}.
Our improvement is double: 
a higher order expansion in the exponent and a larger domain of $y$.
	
	\begin{proposition}\label{prop3.1}
		Let $A>1$ be a positive number and let $J\ge 0$ be an integer.
		We have
		$$
		D_k(y)
		= \prod_{p\le k}\bigg(1-\frac{1}{p}\bigg)^{-2k}
		\exp\bigg(\frac{2k}{\log k}\bigg\{\sum_{j=0}^J \frac{C_j}{(\log k)^j}
		+ O_{A, J}\bigg(\frac1{(\log k)^{J+1}}+\frac {k} {y}\bigg)\bigg\}\bigg)
		$$
		uniformly for $k\ge 2$ and $2k\le y\le k^A$,
		where the $C_j$ is defined as in \eqref{def:Cj}
and the constant implied depends on $A$ and $J$ only.
	\end{proposition}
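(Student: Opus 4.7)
The plan is to use multiplicativity of $n\mapsto d_k(n)^2$ to write $\log D_k(y)=\sum_{p\le y}\log S_p$ with $S_p:=\sum_{\nu\ge 0}d_k(p^\nu)^2/p^{2\nu}$, convert this prime sum into an integral via the prime number theorem (PNT), and expand the reciprocal logarithm that arises after the substitution $t=2k/u$ as a geometric series in $1/\log k$ whose coefficients will, up to explicit bookkeeping, match the constants $C_j$ of \eqref{def:Cj}.

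Concretely, using Lemma~\ref{lem2.1} to replace $\log S_p$ by $\log I_0(2k/p)$ plus a correction, I would first approximate $\sum_{p\le y}\log I_0(2k/p)$ by $\int_2^y\log I_0(2k/u)\,du/\log u$ with quantitative PNT error, substitute $t=2k/u$ to obtain $2k\int_{2k/y}^k\log I_0(t)/(t^2\log(2k/t))\,dt$, and expand
\[
\frac{1}{\log(2k/t)}=\frac{1}{\log k}\sum_{j=0}^J\Bigl(\frac{\log(t/2)}{\log k}\Bigr)^j+O\!\Bigl(\frac{|\log(2/t)|^{J+1}}{(\log k)^{J+2}}\Bigr),
\]
whose truncation furnishes the $1/(\log k)^{J+1}$ error announced in the statement. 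Each resulting integral $I_j(k,y):=\int_{2k/y}^k(\log(t/2))^j\log I_0(t)/t^2\,dt$ is then split at $t=2$. On $[2k/y,2]$ it recovers the first integral in $C_j$ up to a boundary truncation $O((k/y)(\log k)^j)$ (the source of the $k/y$ error), while on $[2,k]$, writing $\log I_0(t)=t+(\log I_0(t)-t)$, one recovers the second integral in $C_j$ (with tail error $O((\log k)^{j+1}/k)$) plus a leftover $\int_2^k(\log(t/2))^j/t\,dt=(\log(k/2))^{j+1}/(j+1)$. Summed against $2k/(\log k)^{j+1}$ via the identity $\sum_{j\ge 0}u^{j+1}/(j+1)=-\log(1-u)$ with $u=1-\log 2/\log k$, these leftovers account for the Mertens-type expansion of $-\sum_{p\le k}2k\log(1-1/p)$ that appears in the target formula.

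The main obstacle is the residual sum $\sum_{p\le y}(\log S_p-\log I_0(2k/p))$: Lemma~\ref{lem2.1} alone bounds it by $O(k/p^2)$ per term, totaling $O(k)$, which is of the same order as the $k$-sized constants identified above and so must be evaluated precisely, not merely bounded. For $p>k$, a direct Taylor expansion of $S_p$ around $1/p=0$ sharpens the error to $O(k^3/p^4)$, summing to $O(1/\log k)$. For $p\le k$ one invokes the saddle-point representation $S_p=\pi^{-1}\int_0^\pi|1-e^{\mathrm{i}\theta}/p|^{-2k}\,d\theta$, whose Laplace expansion yields at leading order
\[
\log S_p - \log I_0(2k/p) = 2k\bigl[-\log(1-1/p)-1/p\bigr]+\log(1-1/p)+O(p/k),
\]
together with a full asymptotic series in $p/k$. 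The sum over $p\le k$ of $2k[-\log(1-1/p)-1/p]$ equals $2k(\gamma-M)$ with $M$ the Meissel--Mertens constant, and this recombines with the $2k\log\log k+2kM$ coming from the PNT piece (via $\sum_{p\le k}2k/p$) to yield the full Mertens term $2k\log\log k+2k\gamma=-\sum_{p\le k}2k\log(1-1/p)+O(\cdots)$. The higher-order terms of the Laplace series, together with the finer PNT corrections, assemble into the contributions to $C_j$ for $j\ge 1$. Making this bookkeeping uniform in the full range $2k\le y\le k^A$ — so that $A$ and $J$ enter only through the quantitative PNT error and the series truncation — is the delicate technical heart of the argument.
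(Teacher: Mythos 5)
Your plan diverges from the paper's proof at the key organisational point, and as written it has genuine gaps. The paper never tries to evaluate $S_p:=\sum_{\nu\ge0}d_k(p^\nu)^2/p^{2\nu}$ precisely for small $p$: for $p\le\sqrt k$ it uses only the crude two-sided bound \eqref{lem2.1:2}, which costs a total multiplicative error ${\rm e}^{O(\sqrt k)}$ --- negligible against the target accuracy $k/(\log k)^{J+2}$ because the prefactor $\prod_{p\le k}(1-1/p)^{-2k}$ is kept \emph{exactly} in the final formula rather than re-expanded; for $\sqrt k<p\le k$ it works with $\log I_0(2k/p)-2k/p$ (pairing against $(1-1/p)^{2k}$), and for $p>k$ with $\log I_0(2k/p)$, so that the PNT conversion and the geometric expansion of $1/(1-\log(t/2)/\log k)$ are only ever performed where the integrand is small and the expansion parameter stays in $[-\log(y/k)/\log k,\,\tfrac12+o(1)]$. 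This is exactly the device that makes your ``main obstacle'' disappear: no saddle-point analysis of $S_p$ and no Mertens-constant bookkeeping are needed at all, and the constants $C_j$ of \eqref{def:Cj} arise as exact integrals of $\log I_0(t)$ and $\log I_0(t)-t$, not from asymptotic expansions of $I_0$.

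Concretely, three steps of your proposal fail quantitatively. First, you expand $1/\log(2k/t)$ over the full range $2k/y\le t\le k$; near $t=k$ the parameter $x=\log(t/2)/\log k$ tends to $1$, the true remainder is $x^{J+1}/(1-x)\asymp(\log k)\,x^{J+1}$ rather than your claimed $O(x^{J+1}/\log k)$, and the leftover integrals $\int_2^k(\log(t/2))^j\,dt/t=(\log(k/2))^{j+1}/(j+1)$ do not decay in $j$, so the resummation $\sum_j u^{j+1}/(j+1)=-\log(1-u)$ with $u\to1$ that you invoke to recover the Mertens product is incompatible with truncation at $J$: cutting at $j=J$ drops a quantity of size $\asymp k\log\log k$. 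Second, replacing $\sum_{p\le y}\log I_0(2k/p)$ by $\int_2^y\log I_0(2k/u)\,du/\log u$ is not a ``quantitative PNT error'': at the small-prime end the summand is $\approx 2k/p$ and the sum-versus-integral discrepancy is of order $k$, which is why the paper only applies the PNT to the small quantities $\log I_0(2k/u)-2k/u$ (on $(\sqrt k,k]$) and $\log I_0(2k/u)$ (on $(k,y]$). Third, your Laplace treatment of the residual $\log S_p-\log I_0(2k/p)$ is not sufficient even at the level of $C_0$: for $p\asymp k$ the argument $2k/p$ is bounded and neither expansion is valid, and each term (including your error $O(p/k)$) summed over $p\le k$ contributes $\asymp k/\log k$, the same order as the main term, so the assertion that the higher Laplace terms ``assemble into the $C_j$'' is unsupported. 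All of this effort is unnecessary once one notices, as in the paper, that \eqref{lem2.1:1} is only needed for $p>\sqrt k$ (where its error sums to $O(\sqrt k)$) and that \eqref{lem2.1:2} handles $p\le\sqrt k$ wholesale.
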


\begin{proof}
Firstly we have trivially
\begin{equation}\label{proof-th1:2}
\prod_{\sqrt{k}<p\le y} \exp\bigg\{O\bigg(\frac k{p^2}\bigg)\bigg\}
= {\rm e}^{O(\sqrt{k})},
\qquad
\prod_{p\le \sqrt{k}} \frac{\min(1, \, p/k)}{50}
= {\rm e}^{O(\sqrt{k})}.
\end{equation}
Secondly, by Lemma \ref{lem2.1}, we can write
\begin{equation}\label{proof-th1:3}
D_k(y)
= \prod_{p\le \sqrt{k}} \bigg(1-\frac{1}{p}\bigg)^{-2k}
\prod_{\sqrt{k}<p\le y}I_0\bigg(\frac{2k}p\bigg) {\rm e}^{O(\sqrt{k})}
= \prod_{p\le k} \bigg(1-\frac{1}{p}\bigg)^{-2k}
\Pi_1 \Pi_2 \, {\rm e}^{O(\sqrt{k})},
\end{equation}
where
$$
\Pi_1
:= \prod_{\sqrt{k}<p\le k} \bigg(1-\frac{1}{p}\bigg)^{2k}I_0\bigg(\frac{2k}p\bigg)
\qquad\text{and}\qquad
\Pi_2
:= \prod_{k<p\le y}I_0\bigg(\frac{2k}p\bigg).
$$
In order to evaluate $\Pi_1$, 
we apply the formula $\log(1+t)=t+O(t^2) \; (|t|\le \tfrac{1}{2})$ and the first estimate in \eqref{proof-th1:2} to obtain 
$$
\Pi_1 
= \exp\bigg\{\sum_{\sqrt{k}<p\le k} \bigg(\log I_0\bigg(\frac{2k}{p}\bigg) - \frac{2k}{p}\bigg)
	+ O\big(\sqrt{k}\big)\bigg\}.
$$
Recall the prime number theorem
$$\pi(u):=\sum_{p\le u} 1 = \int_2^u \frac{\d u}{\log u} + O(u{\rm e}^{-2c\sqrt{\log u}}).$$
Then we can derive that
\begin{equation}\label{3.5}
\begin{aligned}
\sum_{\sqrt{k}<p\le k} \bigg(\log I_0\bigg(\frac{2k}{p}\bigg) - \frac{2k}{p}\bigg)
	& = \int_{\sqrt{k}}^k \bigg(\log I_0\bigg(\frac{2k}{u}\bigg)-\frac{2k}{u}\bigg) \d\pi(u)
	\\
	& = \int_{\sqrt{k}}^k\frac{\log I_0(2k/u)-2k/u}{\log u} \d u + O\big(k{\rm e}^{-c\sqrt{\log k}}\big).
\end{aligned}
\end{equation}
By putting $t=2k/u$ and using the fact that
$$\frac{1}{1-t}
=\sum_{j=0}^J t^j + O_J(t^{J+1}) 
\qquad 
(|t|\le \tfrac{1}{2}),$$
we can derive that the integral in \eqref{3.5} is equal to
\begin{align}\label{3.6}
\frac{2k}{\log k} \int_2^{2\sqrt{k}}\frac{\log I_0(t)-t}{t^2 (1-\frac{\log(t/2)}{\log k})} \d t
= \frac{2k}{\log k} \bigg\{\sum_{j=0}^{J} \frac{C^{*}_j(k)}{(\log k)^j}
	+ O_J\bigg(\frac{C^{*}_{J+1}(k)}{(\log k)^{J+1}}\bigg)\bigg\},
\end{align}
where
$$
C^{*}_j(k)
:= \int_2^{2\sqrt{k}} \bigg(\log\frac{t}{2}\bigg)^j \frac{\log I_0(t)-t}{t^2} \d t.
$$
Since $\log I_0(t)= t + O(\log t)$ for $t\ge 2$, we have
\begin{align*}
	\int_{2\sqrt{k}}^{\infty} \bigg(\log\frac{t}{2}\bigg)^j \frac{\log I_0(t)-t}{t^2} \d t
	& \ll \int_{\sqrt{k}}^{\infty} \frac{(\log t)^{j+1}}{t^2} \d t
	\ll_j \frac{(\log k)^{j+1}}{\sqrt{k}},
\end{align*}
which implies that
$$
C^{*}_j(k)
= C^{*}_j + O_j\bigg(\frac{(\log k)^{j+1}}{\sqrt{k}}\bigg)
\quad\text{with}\quad
C^{*}_j
:= \int_2^{\infty} \bigg(\log\frac{t}{2}\bigg)^j \frac{\log I_0(t)-t}{t^2} \d t.
$$
Combining this with \eqref{3.5} and \eqref{3.6}, we obtain
$$
\sum_{\sqrt{k}<p\le k} \bigg(\log I_0\bigg(\frac{2k}{p}\bigg) - \frac{2k}{p}\bigg)
= \frac{2k}{\log k} \bigg\{\sum_{j=0}^{J} \frac{C^{*}_j}{(\log k)^j}
+ O_J\bigg(\frac{1}{(\log k)^{J+1}}\bigg)\bigg\}.
$$
Therefore we derive that
\begin{equation}\label{Pi_1}
	\Pi_1
	= \exp\bigg(\frac{2k}{\log k} \bigg\{\sum_{j=0}^{J} \frac{C^{*}_j}{(\log k)^j}
	+ O_J\bigg(\frac{1}{(\log k)^{J+1}}\bigg)\bigg\}\bigg).
\end{equation}

For $\Pi_2$, by the prime number theorem, we have similarly
\begin{align*}
	\sum_{k<p\le y}\log I_0\bigg(\frac{2k}{p}\bigg)
	& =\int_k^y\log I_0\left(\frac {2k}u\right)\frac{\d u}{\log u}
	+ O\big(k{\rm e}^{-c\sqrt{\log k}}\big)	\\
	& = \frac{2k}{\log k} \bigg\{\sum_{j=0}^J \frac{C^{**}_j}{(\log k)^j}
	+ O\bigg(\frac{k}{y}\bigg)\bigg\}
	+ O\big(k{\rm e}^{-c\sqrt{\log k}}\big),
\end{align*}
where
$$
C^{**}_j := \int_0^2 \bigg(\log\frac{t}{2}\bigg)^j \frac{\log I_0(t)}{t^2} \d t.
$$
Here we have used the inequality $\log I_0(t)\ll t^2$ for $0\le t< 2$ to evaluate the truncated integral that
\begin{align*}
	\int_0^{2k/y} \bigg(\log\frac{t}{2}\bigg)^j \frac{\log I_0(t)}{t^2} \d t
	\ll \int_0^{k/y} (\log t)^j \d t
	\ll_{A, j} \frac{k(\log k)^j}{y}\cdot
\end{align*}
Therefore we derive that
\begin{equation}\label{Pi_2}
	\Pi_2
	= \exp\bigg(\frac{2k}{\log k} \bigg\{\sum_{j=0}^J \frac{C^{**}_j}{(\log k)^j}
	+ O_{A, J}\bigg(\frac{k}{y}\bigg)\bigg\}\bigg).
\end{equation}
Now Proposition \ref{prop3.1} follows from \eqref{proof-th1:3}, \eqref{Pi_1} and \eqref{Pi_2}
with $C_j = C^{*}_j + C^{**}_j$.
\end{proof}

\vskip 6mm

\section{Moments of the short Euler products}

Let $\zeta(s; y)$ and $D_k(y)$ be defined as in \eqref{ded:zetasy} and \eqref{def:Dky}, respectively.
In this section, we shall evaluate the $2k$-th moment of $\zeta(1+{\rm i}t;y)$ by proving the following proposition.
This is essentially the first part of Theorem 3 in \cite{GS06}.
The main difference is a slightly enlarged length of the short Euler products,
which is important for the proof of Theorem \ref{thm1}.

\begin{proposition}\label{prop4.1}
Let $A>0$ be a constant. Then we have
	$$
	\frac1T\int_T^{2T} |\zeta(1+{\rm i}t; y)|^{2k} \d t
	= D_k(y)
	\bigg\{1+O_A\bigg(\exp\bigg\{-\frac{\log T}{2(\log_2T)^4}\bigg\}\bigg)\bigg\}
	$$
	uniformly for 
\begin{equation}\label{Cond:Tyk}
\begin{cases}
T\ge T_0(A),
\\\noalign{\vskip 0,5mm}
{\rm e}^2\log T\le y\le(\log T)(\log_2T)^{A},
\\\noalign{\vskip 0,5mm}
k\in \N\cap [2, (\log T)/({\rm e}^{10}\log(y/\log T))],
\end{cases}
\end{equation}
where the implied constant and the constant $T_0(A)$ depend on $A$ only.
\end{proposition}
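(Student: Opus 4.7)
The plan is to adapt the proof of Theorem 3 in \cite{GS06}, with the standard truncation--mean-value scheme extended to cover the slightly enlarged range of $y$ allowed in Proposition \ref{prop4.1}.

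First I would expand the $k$-th power of the short Euler product as an absolutely convergent Dirichlet series,
\[
\zeta(1+\mathrm{i}t;y)^k = \sum_{n \in S(y)} \frac{d_k(n)}{n^{1+\mathrm{i}t}},
\]
and introduce a truncation parameter $N$ with $\log N = \log T - \log T/(\log_2 T)^4$. Splitting the series as $P_N(t) + R_N(t)$, where $P_N$ collects the terms $n \le N$, the Montgomery--Vaughan mean value theorem yields
\[
\int_T^{2T} |P_N(t)|^2 \d t = (T + O(N)) \sum_{\substack{n \le N \\ n \in S(y)}} \frac{d_k(n)^2}{n^2}.
\]
The completion of the diagonal sum to $D_k(y)$ is handled by Rankin's trick with exponent $\delta$ of order $1/(\log_2 T)^4$: the tail $\sum_{n > N, n \in S(y)} d_k(n)^2/n^2$ is bounded by $N^{-2\delta} \prod_{p \le y} \sum_{\nu \ge 0} d_k(p^\nu)^2/p^{(2-2\delta)\nu}$, and the latter product is compared against $D_k(y)$ via Lemma \ref{lem2.1}.

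The genuinely difficult point is bounding the $L^2$ mean of the tail $R_N(t)$, which is an infinite Dirichlet series rather than a polynomial. I would handle this by stratifying the $n > N$ in $S(y)$ according to their largest prime factor $P(n)$: writing $n = pm$ with $P(m) \le p \le y$ and grouping $p$ into dyadic intervals $(z, 2z]$, I reduce to bounding sums of the form $\sum_{z < p \le 2z} b(p) p^{-1-\mathrm{i}t}$ weighted by smoother coefficients $b(p)$ that encode the contribution of $m$, to which Lemma \ref{lem2.2} applies. The hypothesis $k \le (\log T)/(3\log z)$ of Lemma \ref{lem2.2} is comfortably ensured by the bound $k \le (\log T)/(\mathrm{e}^{10}\log(y/\log T))$ in \eqref{Cond:Tyk}, which is precisely where the enlarged range of $y$ is felt.

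The main obstacle is balancing the parameters $N$, $\delta$, and the dyadic thresholds so that every accumulated error term is bounded uniformly by $\exp\{-\log T/(2(\log_2 T)^4)\} \cdot T D_k(y)$ over the range \eqref{Cond:Tyk}. Compared to \cite{GS06}, the only new complication comes from the factor $(\log_2 T)^A$ in the upper bound for $y$: it introduces an extra $\log_2 T$ loss in the Rankin estimate and forces the sharper condition on $k$ already encoded in \eqref{Cond:Tyk}, both of which can be absorbed by slight adjustments of the parameters.
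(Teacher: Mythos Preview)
Your approach departs from both the paper and \cite{GS06} in a way that creates a genuine obstruction. The paper does \emph{not} truncate $\zeta(1+\mathrm{i}t;y)^k$ at some height $N$ and appeal to a Montgomery--Vaughan mean value theorem; instead it expands $|\zeta(1+\mathrm{i}t;y)|^{2k}$ directly as the double sum $\sum_{m,n\in S(y)}d_k(m)d_k(n)(mn)^{-1}\cdot T^{-1}\int_T^{2T}(n/m)^{\mathrm{i}t}\d t$, partitions the primes $\le y$ into $J+2\ll\log_2 k$ intervals $\mathcal{I}_0,\dots,\mathcal{I}_{J+1}$, factors $m,n$ accordingly, and controls the off-diagonal through the quantities $g_j=\mathrm{lcm}(m_j,n_j)/\gcd(m_j,n_j)$. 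The key input is Lemma \ref{lem4.2} (Lemma~5 of \cite{GS06} with the enlarged range of $y$), combined with inclusion--exclusion over the intervals and an induction on subsets $\mathcal{R}\subset\{0,\dots,J+1\}$. Lemma \ref{lem2.2} is not used here at all.

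The reason your truncation scheme cannot work in the stated range is that for $k$ near the top of \eqref{Cond:Tyk} the mass of $D_k(y)=\sum_{n\in S(y)}d_k(n)^2/n^2$ lives on integers with $\log n\asymp k\log k$: each local factor $\sum_\nu d_k(p^\nu)^2/p^{2\nu}$ is peaked at $\nu\approx k/p$, so the dominant $n$ satisfy $\log n\approx\sum_{p\le k}(k/p)\log p\sim k\log k$. Taking $y=e^2\log T$ and $k\asymp(\log T)/e^{10}$ gives $k\log k\asymp(\log T)\log_2 T\gg\log T$, so essentially all of $D_k(y)$ comes from $n\gg T\ge N$. Hence $\sum_{n\le N,\,n\in S(y)}d_k(n)^2/n^2$ is a negligible fraction of $D_k(y)$, your Rankin tail bound (which only gains when $\log N>k\log k$) is vacuous, and $R_N(t)$ carries the main term rather than an error; no stratification by largest prime factor together with Lemma \ref{lem2.2} can repair this, since already the diagonal of $\int_T^{2T}|R_N|^2\d t$ is $\asymp T\,D_k(y)$. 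The paper's gcd/lcm bookkeeping over the intervals $\mathcal{I}_j$, with a different Rankin exponent $\delta_j$ on each interval, is precisely what allows one to handle the off-diagonal without ever truncating the friable support.
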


We show that for $k$ and $y$ in \eqref{Cond:Tyk}, the diagonal terms lead to the main term, while the off-diagonal terms only contribute to the error term.
For this, we need to establish a preliminary lemma.

\subsection{A preliminary lemma}
If $2\le k\le 10^6$, we write $\mathcal{I}_0=(k, y]$, $\mathcal{I}_1=(1, k]$ and $J=0$.
When $k>10^6$, we take $J=\lfloor 4(\log_2k)/\log 2\rfloor$ and devide $(1,y]$ into $J+2$ intervals
\begin{align*}
(1,y] = \mathcal{I}_0\cup \mathcal{I}_1\cup \cdots\cup \mathcal{I}_{J+1},
\end{align*}
where $\mathcal{I}_0 := (k,y]$, $\mathcal{I}_j := (k/2^j, k/2^{j-1}]\;(1\le j\le J)$
and $\mathcal{I}_{J+1} :=  (1, k/2^J]\subset (1, 2k/(\log_2k)^4]$.
For each $j\in \{0, 1, \dots, J+1\}$, we use $S(\mathcal{I}_j)$ to represent the set of all positive integers which have prime divisors only in $\mathcal{I}_j$ ($1\in S(\mathcal{I}_j)$ by convention).
Recall that $S(y)$ is the set of $y$-friable integers.
Thus
\begin{equation}\label{n=n1cdotsnJ}
n\in S(y)
\;\Leftrightarrow\;
n \mathop{=}^{{\rm uniquely}} n_0\cdots n_{J+1}
\;\,\text{with}\;\,
n_j\in S(\mathcal{I}_j)
\;\, (0\le j\le J+1).
\end{equation}
Set 
\begin{equation}\label{def:Dkj}
D_{k, j} := \sum_{h\in S(\mathcal{I}_j)} d_k(h)^2/h^2
\end{equation}
such that
\begin{equation}\label{product:Dkj}
D_k(y) = D_{k, 0} D_{k, 1} \cdots D_{k, J+1}.
\end{equation}

We have the following lemma.

\begin{lemma}\label{lem4.2}
Let $G_0:=T^{1/5}$ and $G_j:=T^{1/(5j^2)} \; (j\ge 1)$. Then we have
\begin{equation}\label{eq:lem4.2}
\sum_{g\in S(\mathcal{I}_j), \, g>G_j} \frac{2^{\omega(g)}}{g}\sum_{h\in S(\mathcal{I}_j)}\frac {d_k(gh)d_k(h)}{h^2}
\le D_{k, j}\exp\bigg(\!-\frac{\log T}{(\log_2T)^4}\bigg)
\end{equation}
for $(T,y,k)$ in \eqref{Cond:Tyk} and $0\le j\le J+1$,
where $\omega(n)$ denotes the number of distinct prime factors of $n$.
\end{lemma}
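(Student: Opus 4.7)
The plan is to exploit the multiplicative structure to factor the double sum as an Euler product over primes, and then to apply Rankin's trick.

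\textbf{Step 1: Factorisation.} Since $d_k$ is multiplicative and $g, h$ range independently over $S(\mathcal{I}_j)$, writing $a = \nu_p(g)$ and $b = \nu_p(h)$, the unrestricted double sum factors as $\prod_{p \in \mathcal{I}_j} F_p$ with
\begin{equation*}
F_p := D_p + 2 \sum_{a \ge 1,\, b \ge 0} \frac{d_k(p^{a+b})\, d_k(p^b)}{p^{a+2b}}, \qquad D_p := \sum_{\nu \ge 0}\frac{d_k(p^\nu)^2}{p^{2\nu}}.
\end{equation*}
Substituting $c = a + b$ and symmetrising over the diagonal $c = b$ collapses the double sum to the elegant closed form $F_p = (1-1/p)^{-2k}$. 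By Lemma \ref{lem2.1}, $F_p$ and $D_p$ are comparable, equal up to $O(1)$ for $p \ll k$ and up to $\exp(O(k/p))$ for $p \gg k$.

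\textbf{Step 2: Rankin's trick.} For the tail restriction $g > G_j$, apply $\mathbf{1}[g > G_j] \le (g/G_j)^\alpha$ with $\alpha \in (0, 1/2)$. An extra factor $p^{a\alpha}$ appears in each local factor, and a one-sided variant of the symmetrisation in Step 1 gives
\begin{equation*}
\text{LHS of \eqref{eq:lem4.2}} \le G_j^{-\alpha}\prod_{p \in \mathcal{I}_j} F_p(\alpha), \quad F_p(\alpha) \le 2\bigl(1-p^{-(1-\alpha)}\bigr)^{-k}\bigl(1-p^{-(1+\alpha)}\bigr)^{-k} - D_p.
\end{equation*}
Dividing both sides by $D_{k,j} = \prod_p D_p$, the task reduces to verifying
\begin{equation*}
-\alpha \log G_j + \sum_{p \in \mathcal{I}_j} \log\bigl(F_p(\alpha)/D_p\bigr) \le -\frac{\log T}{(\log_2 T)^4}.
\end{equation*}
The key algebraic input is
\begin{equation*}
\frac{(1-p^{-(1-\alpha)})(1-p^{-(1+\alpha)})}{(1-1/p)^2} = 1 - O\bigg(\frac{(\alpha \log p)^2}{p}\bigg)
\end{equation*}
(valid for $\alpha \log p$ bounded), which yields $\log(F_p(\alpha)/D_p) = O(k(\alpha \log p)^2/p)$ when $p \ll k$ and $O(k \cosh(\alpha \log p)/p)$ when $p \gg k$.

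\textbf{Step 3: Choosing $\alpha_j$ and the main obstacle.} By the prime number theorem and Abel summation, the Euler contribution over $\mathcal{I}_j$ is bounded by $O(k \alpha^2 (\log X_j)^2)$ where $X_j = \sup \mathcal{I}_j$, plus a smaller-order term for the large-prime tail at $j = 0$. Selecting $\alpha_j$ of order $1/(\log_2 T)^2$ in the small-prime regime $j = J+1$ and progressively smaller $\alpha_j$ for intermediate $j$, and using the constraint \eqref{Cond:Tyk} to secure $k(\log_2 T)^2 = o(\log T)$, one checks that $\alpha_j \log G_j = \alpha_j (\log T)/(5j^2)$ dominates both the Euler contribution and the target $\log T/(\log_2 T)^4$. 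The principal obstacle lies precisely at $j = J+1$: the factors $(1-p^{-(1-\alpha)})^{-k}$ for very small primes blow up astronomically, and only the cancellation with $D_p$ inside the ratio $F_p(\alpha)/D_p$ (encoded by the closed form $F_p = (1-1/p)^{-2k}$ of Step 1) makes the estimate tractable. The calibration $1/(5j^2)$ in $G_j = T^{1/(5j^2)}$ is exactly what guarantees uniform savings across all $J+2$ intervals.
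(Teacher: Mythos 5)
Your Steps 1--2 are essentially the paper's own skeleton (the gcd/lcm identity giving the local factor $(1-1/p)^{-2k}$, Rankin's trick on $g$, and the ratio estimate $\bigl(1-p^{-(1-\alpha)}\bigr)\bigl(1-p^{-(1+\alpha)}\bigr)(1-1/p)^{-2}=1-O\bigl((\alpha\log p)^2/p\bigr)$), but the quantitative bookkeeping in Steps 1 and 3 breaks down exactly in the regime the lemma is designed for. The claim that $F_p$ and $D_p$ agree ``up to $O(1)$'' for $p\ll k$ is false: the only available lower bound, \eqref{lem2.1:2} of Lemma \ref{lem2.1}, gives $D_p\ge \tfrac{\min(1,p/k)}{50}(1-1/p)^{-2k}$, so for $p\in\mathcal{I}_j$ with $j\ge1$ each local ratio can be as large as $\asymp k/p$, and your reduction picks up an omitted term $\sum_{p\in\mathcal{I}_j}\log(100k/p)\asymp jk/(2^j\log k)$. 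Moreover \eqref{Cond:Tyk} does \emph{not} secure $k(\log_2T)^2=o(\log T)$: with $y={\rm e}^2\log T$ it allows $k$ as large as $\log T/(2{\rm e}^{10})$, which is the whole point of the enlarged range. In that regime the omitted term is of size $\asymp\log T/\log_2T$ for small $j$, which dwarfs both the target saving $\log T/(\log_2T)^4$ and your Rankin gain $\alpha_j\log G_j\asymp\log T/\bigl(j^2(\log_2T)^2\bigr)$ when $\alpha_j\asymp(\log_2T)^{-2}$; the exponent then has the wrong sign and \eqref{eq:lem4.2} is not obtained. It is precisely this term that dictates the paper's choice $\delta=1/(2^{j/2}\log k)$, for which $\delta\log G_j\asymp\log T/(j^22^{j/2}\log k)$ beats both $jk/(2^j\log k)$ and the $k(\delta\log p)^2/p$ contribution (via $j^3\le{\rm e}^42^{j/2}$); your ``progressively smaller $\alpha_j$'' calibration goes in the wrong direction for small $j$.

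The interval $\mathcal{I}_0=(k,y]$ is a second genuine gap. There the number of primes is $\gg(\log T)(\log_2T)^{A-1}$, so \emph{any} per-prime constant loss — the factor $50$ from \eqref{lem2.1:2}, or even a stray factor $2$ — already costs $\exp\bigl(c(\log T)(\log_2T)^{A-1}\bigr)$, which no admissible $\alpha\le1$ can recover since $\alpha\log G_0\le\tfrac15\log T$. One must instead factor out $D_{k,0}$ with no loss via $d_k(gh)\le d_k(g)d_k(h)$ and then bound $\prod_{p\in\mathcal{I}_0}\bigl(2(1-p^{-(1-\delta)})^{-k}-1\bigr)\le\exp\bigl(3ky^{\delta}\log(25y/k)/\log k\bigr)$ with $\delta\asymp1/\log_2T$ (so that $y^\delta=O(1)$); even then the comparison with $\delta\log G_0=\log T/(50\log_2T)$ succeeds only by a constant margin tied to the ${\rm e}^{10}$ in \eqref{Cond:Tyk}. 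Your sketch treats this as ``a smaller-order term,'' but with $k\asymp\log T$, $y\asymp\log T$ the $j=0$ Euler contribution is itself $\asymp\log T/\log_2T$, so with $\alpha_0\asymp(\log_2T)^{-2}$ the required inequality again fails. In short: the strategy is the right one, but the lemma's content lives in the choice of the Rankin exponents and in avoiding per-prime constant losses, and both are mishandled in the proposal.
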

	
\begin{proof}
This is essentially \cite[Lemma 5]{GS06}. The difference is that the upper bound of $y$ is shifted from $(\log T)(\log_2T)^{4}$ to $(\log T)(\log_2T)^{A}$ 
with arbitrary $A>0$. 
Note that this change is harmless to the range of $k$, 
since whether $y=(\log T)(\log_2T)^{4}$ or $y=(\log T)(\log_2T)^{A}$, 
it does not influence the upper bound of $k$: 
\begin{equation}\label{UB:k}
k\le \frac{\log T}{{\rm e}^{10}\log(y/\log T)}
\le \frac{\log T}{{\rm e}^{10}\log({\rm e}^2\log T/\log T)}
= \frac{\log T}{2{\rm e}^{10}}\cdot
\end{equation}
So this makes no difference so that we can follow Granville and Soundararajan's procedure.
Here we reproduce thier proof for convenience of the reader. 
Denote by $\mathfrak{S}_k(\mathcal{I}_j)$ the member on the left-hand side of \eqref{eq:lem4.2}.

Firstly we consider the case of $1\le j\le J+1$.
For $\delta=1/(2^{j/2}\log k)$,  by Rankin's trick and exchanging the order of the sums, we have
\begin{equation}\label{041}
\mathfrak{S}_k(\mathcal{I}_j)
\le \frac{1}{G_j^{\delta}} \sum_{g\in S(\mathcal{I}_j)}\frac{2^{\omega(g)}}{g^{1-\delta}}\sum_{h\in S(\mathcal{I}_j)}\frac {d_k(gh)d_k(h)}{h^2}
= \frac{1}{G_j^{\delta}} \sum_{h\in S(\mathcal{I}_j)}\frac {d_k(h)}{h^{1+\delta}}\sum_{g\in S(\mathcal{I}_j)}\frac{2^{\omega(g)}d_k(gh)}{(gh)^{1-\delta}}\cdot
\end{equation}
The inner sum is over part of $S(\mathcal{I}_j)$, so for any $h\in S(\mathcal{I}_j)$ we have 
$$
\sum_{g\in S(\mathcal{I}_j)}\frac{2^{\omega(g)}d_k(gh)}{(gh)^{1-\delta}}
\le \sum_{g\in S(\mathcal{I}_j)}\frac{2^{\omega(gh)}d_k(gh)}{(gh)^{1-\delta}}
\le \sum_{g\in S(\mathcal{I}_j)}\frac{2^{\omega(g)}d_k(g)}{g^{1-\delta}}\cdot
$$
Thus
\begin{equation}\label{042}
\begin{aligned}
\mathfrak{S}_k(\mathcal{I}_j)
& \le  \frac{1}{G_j^{\delta}}\sum_{h\in S(\mathcal{I}_j)}\frac {d_k(h)}{h^{1+\delta}}\sum_{g\in S(\mathcal{I}_j)}\frac{2^{\omega(g)}d_k(g)}{g^{1-\delta}}
\\
& =  \frac{1}{G_j^{\delta}} \prod_{p\in \mathcal{I}_j}
\bigg(1-\frac{1}{p^{1+\delta}}\bigg)^{-k}\bigg(2\bigg(1-\frac{1}{p^{1-\delta}}\bigg)^{-k}-1\bigg)
\\
& \le \frac{1}{G_j^{\delta}}\prod_{p\in \mathcal{I}_j}2\bigg(1-\frac1p\bigg)^{-2k}
\Xi_j(p)^{-k}
\end{aligned}
\end{equation}
with
$$
\Xi_j(p) 
:= \bigg(1-\frac{1}{p^{1+\delta}}\bigg)\bigg(1-\frac{1}{p^{1-\delta}}\bigg)\bigg(1-\frac1p\bigg)^{-2}
= 1-\frac{p(p^\delta+p^{-\delta}-2)}{(p-1)^2}\cdot
$$
Noticing that $p\in \mathcal{I}_j$ with $1\le j\le J+1$, we have $p\le k$.
Thus by the first inequality in \eqref{lem2.1:2} of Lemma \ref{lem2.1}, we can derive that
\begin{equation}\label{043} 
\prod_{p\in \mathcal{I}_j}\bigg(1-\frac1p\bigg)^{-2k}\le D_{k, j}\prod_{p\in \mathcal{I}_j}\frac{50k}{p},
\end{equation}
	while 
\begin{equation}\label{044}
\begin{aligned}
\Xi_j(p)
& = 1-\frac{2p}{(p-1)^2}\sum_{n=1}^{\infty}\frac{(\delta\log p)^{2n}}{(2n)!}	
\\
& \ge 1-\frac{2p(\delta\log p)^2}{(p-1)^2}\sum_{n=1}^{\infty}\frac{2^{-j(n-1)}}{(2n)!}	
\ge 1-\frac{c(2\delta\log p)^2}{p}\cdot
\end{aligned}
\end{equation}
with $c:=\sum_{n=1}^{\infty}\frac{2^{-(n-1)}}{(2n)!}<\frac{16}{31}$.
Combining (\ref{042})--(\ref{044}) and using the inequality $-\log(1-t)\le \sqrt{3}t \; (0\le t\le 2^{-1/2}c)$, we can derive that 
\begin{equation}\label{SkIj:UB-1}
\mathfrak{S}_k(\mathcal{I}_j) 
\le D_{k, j} 
\exp\bigg(\log\frac{1}{G_j^{\delta}}+\sum_{p\in \mathcal{I}_j}\bigg(\log\frac{100k}{p}+\frac{\sqrt{3}k(2\delta\log p)^2}{p}\bigg)\bigg).
\end{equation}
We choose $\delta=1/(2^{j/2}\log k)$. 
For $1\le j\le J+1$, we have 
\begin{align*}
\sum_{p\in \mathcal{I}_j}\bigg(\log\frac{100k}{p}+\frac{\sqrt{3}k(2\delta\log p)^2}{p}\bigg)
& \le \bigg(\log\bigg(\frac{100k}{k/2^j}\bigg)+\frac{\sqrt{3}k(2\delta\log (k/2^j))^2}{k/2^j}\bigg) 
\frac{2(k/2^{j-1})}{\log(k/2^{j-1})}
\\
& \le (j\log2+2\log 10+4\sqrt{3})\frac{4k}{2^j\log k}
\le \frac{50j k}{2^j\log k}
\\
& \le \frac{25j \log T}{2^j {\rm e}^{10} \log k}
\le \frac{25\log T}{2^{j/2} j^2 {\rm e}^6 \log k}
\end{align*}
thanks to \eqref{UB:k} and the inequality $j^3\le {\rm e}^{4} 2^{j/2}\;(j\ge 1)$.
Thus 
\begin{equation}\label{casei}
\begin{aligned}
\log\frac{1}{G_j^{\delta}} 
+ \sum_{p\in \mathcal{I}_j} \bigg(\log\frac{100k}{p}+\frac{\sqrt{3}k(2\delta\log p)^2}{p}\bigg)
& \le - \frac{1-125{\rm e}^{-6}}{5j^22^{j/2}}\cdot\frac{\log T}{\log k}
\\
& \le - \frac{1-125{\rm e}^{-6}}{5J^22^{J/2}}\cdot\frac{\log T}{\log k}
\\
& \le -\frac{\log T}{(\log k)^4}
\le -\frac{\log T}{(\log_2T)^4}
\end{aligned}
\end{equation}
for $(T,y,k)$ in \eqref{Cond:Tyk} and $1\le j\le J+1$.
Inserting (\ref{casei}) into \eqref{SkIj:UB-1}, then $1\le j\le J+1$ we have
\begin{align*}
\mathfrak{S}_k(\mathcal{I}_j)
& \le D_{k, j} \exp \bigg(\!-\frac{\log T}{(\log_2T)^4}\bigg).
	\end{align*}
	Thus the lemma in this case follows.

When $j=0$, by Rankin's trick and the trivial inequality $d_k(gh)\le d_k(g)d_k(h)$, 
we have for any $0<\delta<1$
\begin{equation}\label{case:j=0}
\mathfrak{S}_k(\mathcal{I}_0)
\le \frac{D_{k, 0}}{G_0^{\delta}} \sum_{g\in S(\mathcal{I}_0)}\frac{2^{\omega(g)}d_k(g)}{g^{1-\delta}}
= \frac{D_{k, 0}}{G_0^{\delta}} \prod_{p\in \mathcal{I}_0}\bigg(2\bigg(1-\frac{1}{p^{1-\delta}}\bigg)^{-k}-1\bigg).
\end{equation}
For $k<p\le y$, we have the upper bound
$$
2\bigg(1-\frac{1}{p^{1-\delta}}\bigg)^{-k}-1
\le 2\exp\bigg(\frac{2k}{p^{1-\delta}}\bigg)-1
\le \exp\bigg(\frac{3k}{p^{1-\delta}}\bigg)
\le \exp\bigg(\frac{3ky^{\delta}}{\log k}\cdot\frac{\log p}{p}\bigg).
$$
Inserting this into \eqref{case:j=0} and using \cite[Theorem I.1.7]{Te95} in form
$$
\sum_{k<p\le y} \frac{\log p}{p} \le \log\bigg(\frac{25y}{k}\bigg),
$$
we can derive that
\begin{align*}
\mathfrak{S}_k(\mathcal{I}_0)
\le \frac{D_{k, 0}}{G_0^{\delta}} \exp\bigg(\frac{3ky^{\delta}}{\log k}\log\bigg(\frac{25y}{k}\bigg)\bigg).
\end{align*}
Taking $\delta=1/(10\log_2T)$ and noticing that $t\mapsto (t/\log t)\log(25y/t)$ is increasing in $\mathcal{I}_0$,
we deduce, for $(T,y,k)$ in \eqref{Cond:Tyk},
\begin{align*}
\frac{ky^{\delta}}{\log k}\log\bigg(\frac{25y}{k}\bigg)
& \le \frac{{\rm e}^{1/5} (\log T)/({\rm e}^{10}\log(y/\log T))}{\log((\log T)/({\rm e}^{10}\log(y/\log T)))}
\log\bigg(\frac{25{\rm e}^{10}y}{\log T}\log\bigg(\frac{y}{\log T}\bigg)\bigg)
\\
& \le \frac{10{\rm e}^{1/5}\log T}{{\rm e}^{10}\log((\log T)/({\rm e}^{10}\log(y/\log T)))}
\\
& \le \frac{20{\rm e}^{1/5}\log T}{{\rm e}^{10}\log_2T}\cdot
\end{align*}
Inserting this into the preceding inequality, we have
\begin{align*}
\mathfrak{S}_k(\mathcal{I}_0)
\le D_{k, 0}\exp\bigg(\!-\delta\log G_0+\frac{20{\rm e}^{1/5}\log T}{{\rm e}^{10}\log_2T}\bigg)
\end{align*}
for $(T,y,k)$ in \eqref{Cond:Tyk}.
Now the result of Lemma \ref{lem4.2} follows by recalling that $G_0=T^{1/5}$.
	\end{proof}

\subsection{Proof of Proposition \ref{prop4.1}}

As in \cite{GS06}, we shall prove a more general result:
\textit{
Let $A>0$ be a constant and $\mathcal{R}\subset \{0, 1, \dots, J+1\}$. Then we have
\begin{equation}\label{R1}
	\frac1T\int_T^{2T} |\zeta(1+{\rm i}t; \mathcal{R})|^{2k} \d t
	= D_k(\mathcal{R})
	\bigg\{1+O_A\bigg(\exp\bigg\{\!-\frac{\log T}{2(\log_2T)^4}\bigg\}\bigg)\bigg\}
\end{equation}
	uniformly for $(T, y, k)$ in \eqref{Cond:Tyk},
where 
\begin{equation}\label{R2}
\mathcal{I}_{\mathcal{R}} := \mathop{\bigcup}_{r\in \mathcal{R}} \mathcal{I}_r,
\quad
\zeta(s; \mathcal{R})
:= \prod_{p\in \mathcal{I}_{\mathcal{R}}} (1-p^{-s})^{-1},
\quad
D_k(\mathcal{R})
:= \sum_{n\in S(\mathcal{I}_{\mathcal{R}})} d_k(n)^2/n^2
\end{equation}
and the implied constant depends on $A$ only.}

\vskip 1mm

We shall prove \eqref{R1} by induction on the cardinal of $\mathcal{R}$.
The case of $\mathcal{R}=\emptyset$ (i.e. $|\mathcal{R}\blue|=0$) is trivial, 
since $\zeta(s; \emptyset)=1=D_k(\emptyset)$.
Now we suppose that \eqref{R1} holds for all proper subset of $\mathcal{R}$ 
and prove that it is true for $\mathcal{R}$.

Firstly, in view of \eqref{n=n1cdotsnJ}, we have
\begin{equation}\label{4.3}
\begin{aligned}
\frac1T\int_T^{2T} |\zeta(1+{\rm i}t; \mathcal{R})|^{2k} \d t
& = \sum_{m,n\in S(\mathcal{I}_{\mathcal{R}})} \frac{d_k(m)d_k(n)}{mn}\frac1T\int_T^{2T} \bigg(\frac{n}{m}\bigg)^{{\rm i }t}\d t
\\
& = \sum_{\substack{m_r, n_r\in S(\mathcal{I}_r)\\ r\in \mathcal{R}}}
\bigg(\prod_{r\in \mathcal{R}} \frac{d_k(m_r)d_k(n_r)}{m_rn_r}\bigg)
\frac{1}{T} \int_T^{2T} \bigg(\prod_{r\in \mathcal{R}} \frac{n_r}{m_r}\bigg)^{{\rm i }t} \d t.
	\end{aligned}
	\end{equation}
Denote
$$
g_j=\frac{{\rm lcm}(m_j,n_j)}{{\rm gcd}(m_j,n_j)}
\qquad\text{and}\qquad 
h_j={\rm gcd}(m_j,n_j).
$$
Using the principle of inclusion-exclusion, we divide the sum in \eqref{4.3} into two parts
\begin{align}\label{4.4}
\sum_{\substack{m_r, n_r\in S(\mathcal{I}_r)\\ g_r\le G_r\\ \forall r\in \mathcal{R}}}
+ \sum_{\substack{\mathcal{W}\subset \mathcal{R}\\ \mathcal{W}\not=\emptyset}} (-1)^{|\mathcal{W}|}
\sum_{\substack{m_r, n_r\in S(\mathcal{I}_r), \, \forall \, r\in \mathcal{R}\\ g_w> G_w, \forall w\in \mathcal{W}}}
\end{align}
where the $G_j$ is defined as in Lemma \ref{lem4.2}.

In the first sum, the case $g_r=1 \; (r\in \mathcal{R})$ counts the diagonal terms 
and leads to the main term
$$
\sum_{n\in S(\mathcal{I}_{\mathcal{R}})} {d_k(n)^2}/{n^2} = D_k(\mathcal{R}).
$$
Otherwise, we have $\prod_{r\in \mathcal{R}} (m_r/n_r)\neq1$.
Therefore by $g_r\le G_r$ we have
$$
	\bigg|\log\prod_{r\in \mathcal{R}} \frac{m_r}{n_r}\bigg|
	= \bigg|\log\prod_{r\in \mathcal{R}} \frac{m_r/h_r}{n_r/h_r}\bigg|
	\ge\log\Big(1+\prod\limits_{r\in \mathcal{R}} g_r^{-1}\Big)
	\gg \prod\limits_{r\in \mathcal{R}} G_r^{-1}.
$$
Thus in these terms we have
\begin{align*}
	\bigg|\frac1T\int_T^{2T} \bigg(\prod_{r\in \mathcal{R}} \frac{m_r}{n_r}\bigg)^{{\rm i }t}\d t\bigg|
	\ll \frac1T\bigg|\log\prod\limits_{r\in \mathcal{R}} \frac{m_r}{n_r}\bigg|^{-1}
	\ll \frac{1}{T^{2/5}}\cdot
\end{align*}
Therefore the sum over these terms is 
\begin{align*}
& \ll \frac{1}{T^{2/5}} \sum_{m,n\in S(\mathcal{I}_{\mathcal{R}})}\frac{d_k(m)d_k(n)}{mn}
\ll \frac{1}{T^{2/5}} \prod_{p\in \mathcal{I}_{\mathcal{R}}}\bigg(1-\frac1p\bigg)^{-2k}.
\end{align*}
By (\ref{lem2.1:2}) in Lemma \ref{lem2.1} and the inequality $-\log(1-t)\le 2t\;(0\le t\le \frac{1}{2})$, we have
\begin{align*}
\prod_{p\in \mathcal{I}_{\mathcal{R}}} \bigg(1-\frac1p\bigg)^{-2k}
& \le \prod_{\substack{p\in \mathcal{I}_{\mathcal{R}}\\ p\le k}}
\bigg(\frac{50k}{p}\sum_{\nu\ge 0} \frac{d_k(p^{\nu})^2}{p^{2\nu}}\bigg)
\prod_{\substack{p\in \mathcal{I}_{\mathcal{R}}\\ k<p\le y}} \bigg(1-\frac1p\bigg)^{-2k}
\\
& \le D_k(\mathcal{R})
\exp\bigg(\sum_{p\le k}\log\frac{50k}{p}+\sum_{k<p\le y} \frac{4k}{p}\bigg)
\\
& \le D_k(\mathcal{R})
\exp\bigg(\frac{10k}{\log k}\log\frac{25y}{k}\bigg).
\end{align*}
Therefore, the contribution of the first sum in \eqref{4.4} is
\begin{equation}\label{Contribution:first-sum}
\sum_{\substack{m_r, n_r\in S(\mathcal{I}_r)\\ g_r\le G_r\\ \forall r\in \mathcal{R}}}
= D_k(\mathcal{R}) + O\bigg(\frac{D_k(\mathcal{R})}{T^{2/5}} \exp\bigg(\frac{12k}{\log k} \log\frac{25y}{k}\bigg)\bigg)
= D_k(\mathcal{R}) \bigg\{1+O\bigg(\frac{1}{T^{1/3}}\bigg)\bigg\}
\end{equation}
for $(T,y,k)$ as in \eqref{Cond:Tyk}.

Now consider the second sum in \eqref{4.4}.
For a given non-empty subset $\mathcal{W}\subset \mathcal{R}$, we have
$$
\sum_{\substack{m_r, n_r\in S(\mathcal{I}_r)\\ \forall \, r\in \mathcal{R}\sset \mathcal{W}}}
\bigg(\prod_{r\in \mathcal{R}\sset \mathcal{W}} \frac{d_k(m_r)d_k(n_r)}{m_rn_r}\bigg)
\bigg(\prod_{r\in \mathcal{R}\sset \mathcal{W}} \frac{n_r}{m_r}\bigg)^{{\rm i }t} 
= |\zeta(1+{\rm i}t; \mathcal{R}\sset \mathcal{W})|^{2k}.
$$
where $\zeta(s; \mathcal{R}\sset \mathcal{W})$ is defined as in \eqref{R2}.
Thus the inner sum in \eqref{4.4} gives
\begin{align*}
& \sum_{\substack{m_r, n_r\in S(\mathcal{I}_r), \, \forall \, r\in \mathcal{R}\\ g_w> G_w, \forall w\in \mathcal{W}}}
\bigg(\prod_{r\in \mathcal{R}} \frac{d_k(m_r)d_k(n_r)}{m_rn_r}\bigg)
\frac{1}{T} \int_T^{2T} \bigg(\prod_{r\in \mathcal{R}} \frac{n_r}{m_r}\bigg)^{{\rm i }t} \d t
\\
& = \sum_{\substack{m_w, n_w\in S(\mathcal{I}_w)\\ g_w> G_w\\ \forall w\in \mathcal{W}}}
\bigg(\prod_{w\in \mathcal{W}} \frac{d_k(m_w)d_k(n_w)}{m_wn_w}\bigg)
\frac{1}{T} \int_T^{2T} \bigg(\prod_{w\in \mathcal{W}} \frac{n_w}{m_w}\bigg)^{{\rm i }t} 
|\zeta(1+{\rm i}t; \mathcal{R}\sset \mathcal{W})|^{2k} \d t,
\end{align*}
which is bounded by
\begin{align}\label{(4.5)}
\sum_{\substack{m_w, n_w\in S(\mathcal{I}_w)\\ g_w> G_w\\ \forall w\in \mathcal{W}}}
\bigg(\prod_{w\in \mathcal{W}} \frac{d_k(m_w)d_k(n_w)}{m_wn_w}\bigg)
\frac{1}{T} \int_T^{2T} |\zeta(1+{\rm i}t; \mathcal{R}\sset \mathcal{W})|^{2k} \d t.
\end{align}
Observe that the integral does not depend on $w$, so we can change the order of sum and integral.
Further, we have
\begin{align}
\sum_{\substack{m_w, n_w\in S(\mathcal{I}_w)\\ g_w> G_w\\ \forall w\in \mathcal{W}}}
\bigg(\prod_{w\in \mathcal{W}} \frac{d_k(m_w)d_k(n_w)}{m_wn_w}\bigg)
= \prod_{w\in \mathcal{W}} 
\sum_{\substack{m_w, n_w\in S(\mathcal{I}_w)\\ g_w> G_w}}
\frac{d_k(m_w)d_k(n_w)}{m_wn_w}\cdot
\end{align}
For any multiplicative function $f$, we have $f(m)f(n)=f({\rm lcm}(m,n))f({\rm gcd}(m,n))$.
While the number of $(m,n)$ such that ${\rm gcd}(m,n)/{\rm lcm}(m,n)=g$, ${\rm gcd}(m,n)=h$ is $2^{\omega(g)}$.
Thus we derive that
\begin{equation}\label{(4.8)}
\begin{aligned}
\sum_{\substack{m_w, n_w\in S(\mathcal{I}_w)\\ g_w> G_w}}
\frac{d_k(m_w)d_k(n_w)}{m_wn_w}
& = \sum_{\substack{m_w, n_w\in S(\mathcal{I}_w)\\ g_w> G_w}}
\frac{d_k(g_wh_w)d_k(h_w)}{g_wh_w^2}
\\
& = \sum_{\substack{m_w, n_w\in S(\mathcal{I}_w)\\ g_w> G_w}}
\frac{2^{\omega(g_w)}d_k(g_wh_w)d_k(h_w)}{g_wh_w^2}\cdot
\end{aligned}
\end{equation}
Therefore by Lemma \ref{lem4.2}, this is bounded by $D_k(\mathcal{W})\exp(-(\log T)/(\log_2T)^4)$.

Summarizing \eqref{(4.5)}-\eqref{(4.8)}, we deduce that the second sum in \eqref{4.4} is bounded by
\begin{align}\label{(4.9)}
\sum_{\substack{\mathcal{W}\subset \mathcal{R}\\ \mathcal{W}\not=\emptyset}} 
D_k(\mathcal{W})
\bigg(\frac{1}{T} \int_T^{2T}|\zeta(1+{\rm i}t; \mathcal{R}\sset \mathcal{W})|^{2k}\d t\bigg)
\exp\bigg(\!-\frac{\log T}{(\log_2T)^4}\bigg).
\end{align}
According to the induction hypothesis, it follows that
\begin{align}\label{(4.10)}
\frac{1}{T} \int_T^{2T} |\zeta(1+{\rm i}t; \mathcal{R}\sset \mathcal{W})|^{2k}\d t
\ll D_k(\mathcal{R}\sset \mathcal{W}).
\end{align}
Then noticing that $D_k(\mathcal{W})D_k(\mathcal{R}\sset \mathcal{W})=D_k(\mathcal{R})$,
\eqref{(4.9)} is bounded by
\begin{equation}\label{R3}
2^{L+1} D_k(\mathcal{R}) \exp\bigg(\!-\frac{\log T}{(\log_2T)^4}\bigg)
\ll D_k(\mathcal{R}) \exp\bigg(\!-\frac{\log T}{2(\log_2T)^4}\bigg)
\end{equation}
since $2^{L+1}\ll \exp(2\log_2k)\ll \exp(2\log_3T)$.
Now the desired result follows from \eqref{Contribution:first-sum} and \eqref{R3}.
\hfill
$\square$

\vskip 6mm

\section{Proof of Theorem \ref{thm1}}

Firstly we recall the definition of of the short Euler products
$$
\zeta(s; y) := \prod_{p\le y} \Big(1-\frac{1}{p^s}\Big)^{-1}
$$
and the definition of its distribution function:
$$
\Phi_T(\tau; y) := \frac{1}{T}  {\rm meas}\big\{t\in[T,2T]:|\zeta(1+{\rm i}t; y)|>{\rm e}^{\gamma}\tau\big\}.
$$
The aim of this section is to prove the following result.

\begin{proposition}\label{prop5.1}
	Let $A>0$ be any constant and let $J\ge 1$ be an integer, and $\varepsilon$ satisfying \eqref{UB:varepsilon}. Then we have
\begin{equation}\label{(5.2)}
	\Phi_T(\tau+\varepsilon;y)
	\le \exp\bigg(\!-\frac{2{\rm e}^{\tau-C_0-1}}{\tau}
	\bigg\{1+\sum_{j=1}^J \frac{\mathfrak{a}_j}{\tau^j}
	+ O_J\bigg(\frac{1}{\tau^{J+1}} + \frac{{\rm e}^{\tau}}{y}\bigg)\bigg\}\bigg)
	\le\Phi_T(\tau-\varepsilon;y)
\end{equation}
	uniformly for
	\begin{equation}\label{(5.3)}
		T\ge T_0(A),
		\quad
		{\rm e}^2\log T\le y\le(\log T)(\log_2T)^{A},
		\quad
		2\le \tau\le \log_2T-20,
	\end{equation}
	where the $\mathfrak{a}_j$ and $C_0$ are the same as in Theorem \ref{thm1},
$T_0(A)$ is a positive constant depending on $A$ and the implied constant depends on $A$ and $J$ at most.
\end{proposition}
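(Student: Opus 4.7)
The plan is the classical method-of-moments: Markov for the upper bound, a Cauchy--Schwarz / Paley--Zygmund style second-moment inequality for the lower bound, fed with the moment evaluations from Propositions \ref{prop4.1} and \ref{prop3.1}, followed by a careful optimization in the order $k$. Writing $M_k := \tfrac1T \int_T^{2T}|\zeta(1+\mathrm{i}t;y)|^{2k}\,\mathrm{d}t$, Markov gives
\begin{align*}
\Phi_T(\tau+\varepsilon;y) \le (e^{\gamma}(\tau+\varepsilon))^{-2k}\,M_k
\end{align*}
for any integer $k$ satisfying \eqref{Cond:Tyk}. Proposition \ref{prop4.1} replaces $M_k$ by $D_k(y)\{1+O(e^{-\log T/(2(\log_2T)^4)})\}$, while Proposition \ref{prop3.1} expands $\log D_k(y)$ to order $J$. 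Inserting the full Mertens expansion
\begin{align*}
\log\prod_{p\le k}\Big(1-\tfrac1p\Big)^{-1} = \gamma + \log\log k + \sum_{j\ge 1}\frac{m_j}{(\log k)^j} + O_J((\log k)^{-J-1})
\end{align*}
(which follows from the prime number theorem with the classical $e^{-c\sqrt{\log u}}$ error), the Markov estimate rewrites as $\exp(F_k(\tau+\varepsilon))$ with
\begin{align*}
F_k(\sigma) = 2k\log\frac{\log k}{\sigma} + \frac{2k}{\log k}\sum_{j=0}^J\frac{C_j}{(\log k)^j} + 2k\sum_{j\ge 1}\frac{m_j}{(\log k)^j}+E,
\end{align*}
where $E$ collects all previous error contributions.

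Minimizing $F_k(\sigma)$ over $k$ (equivalently, setting $\partial_{\log k} F_k=0$) yields an implicit equation whose solution admits the asymptotic expansion
\begin{align*}
\log k_\star(\sigma) = \sigma-(1+C_0)+\sum_{j\ge 1}\frac{\gamma_j}{\sigma^j}
\end{align*}
with coefficients $\gamma_j$ recursively determined by $C_0,\dots,C_J$ and $m_1,\dots,m_J$. The integer $k$ closest to $e^{\log k_\star(\tau+\varepsilon)}$ lies in the range \eqref{Cond:Tyk}: the bound $\tau\le \log_2T-20$ forces $\log k \le \log_2 T - O(1)$, and the lower bound $y\ge e^2\log T$ keeps the constraint on $k$ compatible; integer rounding costs only $O(1/k)$ in $F_k$, absorbed by the stated error. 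Substituting $\log k_\star$ back into $F_k$ and expanding systematically in $1/(\tau+\varepsilon)$ gives
\begin{align*}
F_{k_\star}(\tau+\varepsilon) = -\frac{2e^{(\tau+\varepsilon)-C_0-1}}{\tau+\varepsilon}\Big\{1+\sum_{j=1}^J\frac{\mathfrak{a}_j}{(\tau+\varepsilon)^j}+O_J\Big(\frac{1}{\tau^{J+1}}+\frac{e^\tau}{y}\Big)\Big\},
\end{align*}
where the $\mathfrak{a}_j$ are polynomial in the $\gamma_i,C_i,m_i$. A final expansion in $\varepsilon$ (using the bound \eqref{UB:varepsilon} to absorb the shift $(\tau+\varepsilon)\to\tau$ into the $O(\tau^{-J-1})$ error) recovers the central display of \eqref{(5.2)}; the coefficient $\mathfrak{a}_1 = 2(1+C_0-C_1)$ drops out at the first nontrivial order from combining $m_1$, $C_1$ and $\gamma_1$.

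For the lower bound, set $V = e^\gamma(\tau-\varepsilon)$ and split $T M_k = \int_{|\zeta|\le V}|\zeta|^{2k} + \int_{|\zeta|>V}|\zeta|^{2k}$. The first piece is at most $V^{2k}T$; Cauchy--Schwarz applied to the second (with weights $\mathbf{1}_{|\zeta|>V}^{1/2}$ and $|\zeta|^{2k}\mathbf{1}_{|\zeta|>V}^{1/2}$) bounds it by $(\Phi_T(\tau-\varepsilon;y)T)^{1/2}(T M_{2k})^{1/2}$. Rearranging yields
\begin{align*}
\Phi_T(\tau-\varepsilon;y)\,M_{2k} \ge (M_k-V^{2k})^2.
\end{align*}
Applying Proposition \ref{prop4.1} simultaneously to the $2k$-th and $4k$-th moments --- still permitted under \eqref{Cond:Tyk} because $\tau\le\log_2T-20$ --- and rerunning the optimization with $(\tau+\varepsilon)$ replaced by $(\tau-\varepsilon)$, while arranging $V^{2k}\le \tfrac12 M_k$ so that $(M_k - V^{2k})^2 \sim M_k^2$, collapses the right side to $D_k(y)^2/D_{2k}(y)\cdot(1+o(1))$, whose expansion via Proposition \ref{prop3.1} reproduces the middle term of \eqref{(5.2)} from below with the \emph{same} coefficients $\mathfrak{a}_j$. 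The main obstacle is the order-by-order bookkeeping through three nested asymptotic expansions (Mertens, Proposition \ref{prop3.1}, the implicit equation for $\log k_\star$): the $\mathfrak{a}_j$ admit no clean closed form and must be tracked recursively, with $\mathfrak{a}_1 = 2(1+C_0-C_1)$ serving as the base-case consistency check that the whole scheme is internally coherent.
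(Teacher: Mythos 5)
The upper-bound half of your plan (Markov at level $\tau+\varepsilon$ with a moment order optimized at $\log k_\star \approx \tau - (1+C_0)$, fed by Propositions \ref{prop4.1} and \ref{prop3.1} plus Mertens) is sound in outline and is essentially the mechanism in the paper, which however routes it through the identity
\begin{equation*}
2\kappa\int_0^{\infty}\Phi_T(t;y)\,t^{2\kappa-1}\,\mathrm{d}t=\frac{{\rm e}^{-2\kappa\gamma}}{T}\int_T^{2T}|\zeta(1+{\rm i}t;y)|^{2\kappa}\,\mathrm{d}t
\end{equation*}
and a H\"older interpolation to non-integer $\kappa$ (Lemma \ref{lem5.2}), so that no integer rounding is needed; also note that with PNT-strength Mertens your coefficients $m_j$ all vanish, which is harmless.

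The genuine gap is the lower bound, i.e.\ the right-hand inequality of \eqref{(5.2)}. Your Cauchy--Schwarz/Paley--Zygmund step gives $\Phi_T(\tau-\varepsilon;y)\ge (M_k-V^{2k})^2/M_{2k}$, and you claim this collapses to $D_k(y)^2/D_{2k}(y)$ which ``reproduces the middle term with the same coefficients $\mathfrak{a}_j$''. That is false: by Proposition \ref{prop3.1} and Mertens, $\log D_{2k}(y)-2\log D_k(y)=4k\sum_{k<p\le 2k}p^{-1}+O(k/(\log k)^2)=(4\log 2)\,k/\log k\,\{1+o(1)\}$, so the best your inequality can give is $\exp\big(-(4\log 2)\,k/\log k\,\{1+o(1)\}\big)$. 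Moreover the requirement $V^{2k}\le\tfrac12 M_k$ forces $\log k\ge \tau-C_0+o(1)$ (with the upper-bound choice $k\approx {\rm e}^{\tau-C_0-1}$ one even has $V^{2k}>M_k$, so the method yields nothing), and with the smallest admissible $k$ the bound is $\exp\big(-(4\log 2)\,{\rm e}^{\tau-C_0}/\tau\,\{1+o(1)\}\big)$, whereas \eqref{(5.2)} demands $\exp\big(-2{\rm e}^{\tau-C_0-1}/\tau\,\{1+O(1/\tau)\}\big)$; since $4\log 2>2/{\rm e}$, the loss is a constant factor \emph{in the leading term of the exponent}, not something absorbable in $O(\tau^{-J-1})$. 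The defect is structural: $M_{2k}$ exceeds $M_k^2$ by $\exp(\{4\log 2+o(1)\}k/\log k)$, which is of the same order as the main term, so no second-moment argument at a doubled order can match the upper bound. This is precisely why the paper instead proves two-sided concentration of $\int_0^{\infty}\Phi_T(t;y)t^{2\kappa-1}\,\mathrm{d}t$ on the window $[\tau-\varepsilon,\tau+\varepsilon]$ by comparing moments of two \emph{nearby} orders $\kappa$ and $K=\kappa{\rm e}^{\varepsilon}$ with $\varepsilon\asymp(\log\kappa)^{-J-1}$ (this is where the precise choice of $a_0,a_1,\dots$ and the order-$(J{+}1)$ expansions are actually needed), after which monotonicity of $\Phi_T(\cdot;y)$ gives both inequalities of \eqref{(5.2)} simultaneously. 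Without an ingredient of this kind (e.g.\ H\"older with exponent tending to $1$, which amounts to the same nearby-moment comparison and again requires non-integer moments), your proposal does not prove the lower bound.
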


\subsection{Two preliminary lemmas}

In the following lemma, we will see the correlation between the distribution function and the moments of the short Euler products:

\begin{lemma}\label{lem5.2}
	Let $A>0$ be any constant and let $J\ge 1$ be an integer. Then we have
	\begin{equation}\label{(4.1)}
		\int_0^{\infty} \Phi_T(t; y) t^{2\kappa-1} \d t
		= \frac{(\log\kappa)^{2\kappa}}{2\kappa}
		\exp\bigg(\frac{2\kappa}{\log\kappa}
		\bigg\{\sum_{j=0}^J \frac{C_j}{(\log\kappa)^j} + O_{A, J}\bigg(\frac{\kappa}{y}+\frac{1}{(\log\kappa)^{J+1}}\bigg)\bigg\}\bigg)
	\end{equation}
	uniformly for
	\begin{equation}\label{(4.2)}
		T\ge 2,
		\quad
		{\rm e}^2\log T\le y\le(\log T)(\log_2T)^{A},
		\quad
		2\le \kappa\le (\log T)/({\rm e}^{10}\log(y/\log T)),
	\end{equation}
	where the $C_j$ are defined as in \eqref{def:Cj} and the implied constant depends on $A$ and $J$ at most.
\end{lemma}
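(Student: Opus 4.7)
The plan is to convert the integral into a $2\kappa$-th moment of $|\zeta(1+{\rm i}t;y)|/{\rm e}^\gamma$ via the layer-cake representation, then evaluate this moment using Propositions \ref{prop4.1} and \ref{prop3.1} together with an effective form of Mertens' theorem. Concretely, by Fubini's theorem, for any $\kappa>0$,
\begin{equation*}
\int_0^{\infty} \Phi_T(t; y) t^{2\kappa-1} \d t
= \frac{1}{T}\int_T^{2T} \int_0^{|\zeta(1+{\rm i}u;y)|/{\rm e}^\gamma} t^{2\kappa-1}\d t\,\d u
= \frac{1}{2\kappa\,{\rm e}^{2\kappa\gamma}}\cdot \frac{1}{T}\int_T^{2T} |\zeta(1+{\rm i}u; y)|^{2\kappa}\d u,
\end{equation*}
which reduces the problem to evaluating a short-Euler-product moment of the kind handled in Section~4.

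For integer $\kappa=k$ in the allowed range, Proposition \ref{prop4.1} gives $\tfrac{1}{T}\int_T^{2T}|\zeta(1+{\rm i}u;y)|^{2k}\d u = D_k(y)\{1+O_A(\exp(-\log T/(2(\log_2T)^4)))\}$, the multiplicative error being super-polynomially small in $\log T$ and hence trivially absorbed into $\exp(\tfrac{2\kappa}{\log\kappa}\cdot O_J((\log\kappa)^{-J-1}))$. One then applies Proposition \ref{prop3.1} to expand $D_k(y)$, and invokes the effective Mertens formula (a consequence of the prime number theorem with classical error) $\prod_{p\le k}(1-1/p)^{-1}={\rm e}^\gamma\log k\,(1+O({\rm e}^{-c\sqrt{\log k}}))$, which upon raising to the $2k$-th power yields $\prod_{p\le k}(1-1/p)^{-2k}={\rm e}^{2k\gamma}(\log k)^{2k}\exp(O(k{\rm e}^{-c\sqrt{\log k}}))$. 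The factor ${\rm e}^{2k\gamma}$ cancels the ${\rm e}^{2\kappa\gamma}$ produced by the layer-cake identity, and since $(\log k)^{J+2}\ll {\rm e}^{c\sqrt{\log k}}$ for any fixed $J$ once $k$ is large, the leftover Mertens contribution $O(k{\rm e}^{-c\sqrt{\log k}})$ is dominated by $k/(\log k)^{J+1}$. Assembling these pieces yields the claimed asymptotic for integer $\kappa$.

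For non-integer $\kappa$ the extension is standard: the function $\kappa\mapsto \log M(\kappa)$, with $M(\kappa):=\tfrac{1}{T}\int_T^{2T}|\zeta(1+{\rm i}u;y)|^{2\kappa}\d u$, is convex by H\"older's inequality, so sandwiching between consecutive integer values yields the real-$\kappa$ formula, the slow variation of the target asymptotic in $\kappa$ accommodating the interpolation loss. Alternatively, one may rerun the proof of Proposition \ref{prop4.1} verbatim with the generalized divisor coefficients $d_\kappa(n)$, whose diagonal Dirichlet series still satisfies the Bessel identity of Lemma \ref{lem2.1}. The main technical obstacle is the error bookkeeping in the Mertens step: after exponentiating by $2k$, the combined error must remain within the tolerance $\exp(\tfrac{2\kappa}{\log\kappa}\cdot O_J((\log\kappa)^{-J-1}) + \tfrac{2\kappa}{\log\kappa}\cdot O_{A,J}(\kappa/y))$, which requires careful tracking of the $J$- and $A$-dependence through the uniformity ranges \eqref{Cond:Tyk} and \eqref{(4.2)}.
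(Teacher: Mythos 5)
Your proposal is correct and follows essentially the same route as the paper: the layer-cake identity reducing the integral to the $2\kappa$-th moment, Propositions \ref{prop4.1} and \ref{prop3.1} plus Mertens' theorem (which the paper leaves implicit, and which you rightly make explicit, including cancellation of ${\rm e}^{2\kappa\gamma}$ and absorption of the $O(\kappa{\rm e}^{-c\sqrt{\log\kappa}})$ error) for integer $\kappa$, and H\"older/log-convexity interpolation with the slow variation of the main term for non-integer $\kappa$, exactly as in the paper's proof.
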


\begin{proof}
	For any $\kappa>0$, we have
	\begin{align*}
		\int_0^{\infty} \Phi_T(u; y) u^{2\kappa-1} \d u
		& = \frac{1}{T} \int_0^{\infty}
		\bigg(\mathop{\int_T^{2T}}_{{\rm e}^{-\gamma} |\zeta(1+{\rm i}t; y)|>u} 1 \d t\bigg) u^{2\kappa-1} \d u
		\\
		& = \frac{1}{T} \int_T^{2T}
		\bigg(\int_0^{{\rm e}^{-\gamma} |\zeta(1+{\rm i}t; y)|} u^{2\kappa-1} \d u\bigg) \d t
		\\\noalign{\vskip 1mm}
		& = \frac{1}{T} \int_T^{2T} \frac{1}{2\kappa} ({\rm e}^{-\gamma} |\zeta(1+{\rm i}t; y)|)^{2\kappa} \d t,
	\end{align*}
	i.e.
	\begin{equation}\label{(4.3)}
		2\kappa\int_0^{\infty}\Phi_T(t; y)t^{2\kappa-1} \d t
		= \frac{{\rm e}^{-2\kappa\gamma}}{T} \int_T^{2T} |\zeta(1+{\rm i}t; y)|^{2\kappa} \d t.
	\end{equation}
	Now \eqref{(4.1)} follows from Propositions \ref{prop3.1} and \ref{prop4.1}
	when $\kappa$ is an integer.
	
	Next let $\kappa\notin \N$ be a real number verifying \eqref{(4.2)}.
	There is a unique integer $k$ verifying \eqref{(4.2)} such that $k-1<\kappa<k$.
	The formula \eqref{(4.3)} with $\kappa=\frac{1}{2}$ and \cite[Theorem 3]{GS06} imply that
	$$
	\int_0^{\infty}\Phi_T(u; y) \d u
	= \frac{{\rm e}^{-\gamma}}{T} \int_T^{2T} |\zeta(1+{\rm i}t; y)| \d t
	\le {\rm e}^{-\gamma} \bigg(\frac{1}{T} \int_T^{2T} |\zeta(1+{\rm i}t; y)|^4 \d t\bigg)^{1/4}
	\ll 1.
	$$
	Now for any $b>a>0$, by the H\"older inequality, it follows that
	$$
	\int_0^{\infty}\Phi_T(t; y) t^a \d t
	\le \bigg(\int_0^{\infty}\Phi_T(t; y) \d t\bigg)^{1-a/b} \bigg(\int_0^{\infty}\Phi_T(t; y) t^b \d t\bigg)^{a/b}.
	$$
	Thus there are two absolute positive constants $C$ and $D$ such that
	\begin{align*}
		\int_0^{\infty}\Phi_T(t; y) t^a \d t
		& \le C \bigg(\int_0^{\infty}\Phi_T(t; y) t^b \d t\bigg)^{a/b},
		\\
		\int_0^{\infty}\Phi_T(t; y) t^b \d t
		& \ge \bigg(D\int_0^{\infty}\Phi_T(t; y) t^a \d t\bigg)^{b/a}.
	\end{align*}
	Applying the first inequality with $(a, b)=(2\kappa-1, 2k-1)$
	and the second inequality with $(a, b)=(2k-3, 2\kappa13)$ respectively,
	we can obtain that
	$$
	\bigg(D\int_0^{\infty}\Phi_T(t; y) t^{2(k-1)-1} \d t\bigg)^{\frac{2\kappa-1}{2k-3}}
	\le \int_0^{\infty}\Phi_T(t; y) t^{2\kappa-1} \d t
	\le C \bigg(\int_0^{\infty}\Phi_T(t; y) t^{2k-1} \d t\bigg)^{\frac{2\kappa-1}{2k-1}}.
	$$
	On the other hand, seting
	$f(u) := \frac{2u}{\log u}\sum_{j=0}^J \frac{C_j}{(\log u)^j}$,
	then
	$f'(u) = -\frac{2}{\log u} \sum_{j=0}^J \frac{jC_j}{(\log u)^j}$.
	Thus
	\begin{equation}\label{kappa-1.k.kappa}
		f(k-1) = f(\kappa) + O(1)
		\qquad\text{and}\qquad
		f(k) = f(\kappa) + O(1).
	\end{equation}
	Now we can obtain \eqref{(4.1)} for $\kappa\notin \N$
	by substituting \eqref{(4.1)} for integers $k-1$ and $k$ and by using \eqref{kappa-1.k.kappa}.
	This completes the proof of Lemma \ref{lem5.2}.
\end{proof}

\begin{lemma}\label{lem5.3}
	Let $\{a_j\}_{j\ge 0}$ be a sequence of real numbers
	and $J\ge 0$ be an integer.
	If
	\begin{equation}\label{lem4.2:1}
		\tau = \log\kappa + a_0 + \sum_{j=1}^J \frac{a_j}{(\log\kappa)^j} + O_J\bigg(\frac{1}{(\log\kappa)^{J+1}}\bigg)
		\qquad
		(k\to\infty),
	\end{equation}
	then there is a sequence of real numbers $\{b_j\}_{j\ge 0}$ such that
	\begin{equation}\label{lem4.2:2}
		\log\kappa = \tau + b_0 + \sum_{j=1}^J \frac{b_j}{\tau^j} + O_J\bigg(\frac{1}{\tau^{J+1}}\bigg)
		\qquad
		(\tau\to\infty).
	\end{equation}
	Further we have $b_0=-a_0$ and $b_1=-a_1$ .
\end{lemma}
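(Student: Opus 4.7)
The statement is purely formal: it is an asymptotic inversion of a power series in $1/\log\kappa$. The strategy is to set $L := \log\kappa$, rewrite the hypothesis \eqref{lem4.2:1} as
\[
L - \tau = -a_0 - \sum_{j=1}^{J} \frac{a_j}{L^j} + O_J\bigl(L^{-(J+1)}\bigr),
\]
and to construct the coefficients $b_0, b_1, \dots, b_J$ one by one by induction on the order, using at each step the previously obtained approximation of $L$ by a polynomial in $1/\tau$. As a crude starting point, the displayed identity gives $L = \tau + O(1)$, hence $L \asymp \tau$ as $\tau \to \infty$, so that $L^{-1} = \tau^{-1} + O(\tau^{-2})$ bridges powers of $1/L$ and powers of $1/\tau$.

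The inductive step is as follows. Suppose we have already proved
\[
L = \tau + b_0 + \frac{b_1}{\tau} + \cdots + \frac{b_{n-1}}{\tau^{n-1}} + O_J\bigl(\tau^{-n}\bigr)
\]
for some $0 \le n \le J$. Writing $L = \tau\bigl(1 + \tau^{-1}(b_0 + b_1/\tau + \cdots + b_{n-1}/\tau^{n-1})\bigr) + O(\tau^{-n})$, we expand each $L^{-j}$ with $1 \le j \le J$ by the binomial series
\[
\frac{1}{L^j} = \frac{1}{\tau^j}\sum_{m=0}^{J-j}\binom{-j}{m}\biggl(\frac{b_0 + b_1/\tau + \cdots}{\tau}\biggr)^{\!m} + O_J\bigl(\tau^{-(J+1)}\bigr),
\]
substitute the resulting finite expansions into the displayed identity for $L - \tau$, and collect the coefficient of $\tau^{-n}$ on the right-hand side. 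Setting this equal to $b_n$ defines $b_n$ uniquely as a polynomial in $a_0, \dots, a_n, b_0, \dots, b_{n-1}$, and the remaining terms are absorbed in $O_J(\tau^{-(n+1)})$. The initial case $n = 0$ reads $L - \tau = -a_0 + O(1/L) = -a_0 + O(1/\tau)$, giving $b_0 = -a_0$. For $n=1$ only the $j=1$ summand contributes to the $\tau^{-1}$ coefficient, yielding $-a_1/L = -a_1/\tau + O(\tau^{-2})$ and hence $b_1 = -a_1$, as claimed.

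The only nontrivial point is the bookkeeping of error terms: one must verify that truncating each geometric expansion of $L^{-j}$ after $J-j$ terms produces an error $O_J(\tau^{-(J+1)})$, and that the hypothesis error $O_J(L^{-(J+1)})$ transfers cleanly to $O_J(\tau^{-(J+1)})$ via $L \asymp \tau$. This is the main (and essentially only) obstacle, but it is routine once the induction hypothesis is in place. No deeper idea is required; the lemma is a standard asymptotic reversion, and the formulas $b_0 = -a_0$, $b_1 = -a_1$ fall out of the first two steps of the induction.
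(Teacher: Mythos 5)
Your proposal is correct and follows essentially the same route as the paper: an inductive bootstrap starting from $\log\kappa=\tau+O(1)$, substituting the current approximation back into the hypothesis and expanding $(\,1+t)^{-j}$ to extract the next coefficient, which yields $b_0=-a_0$ and $b_1=-a_1$ exactly as in the paper's recurrence argument. The only (immaterial) difference is that you induct on the index $n$ of the coefficient within a fixed $J$, whereas the paper inducts on $J$ itself, assuming the expansion with one extra term in the inductive step.
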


\begin{proof}
	We shall reason by recurrence on $J$.
	Taking $J=0$ in \eqref{lem4.2:1}, we have
	$$
	\tau = \log\kappa + a_0 + O\Big(\frac{1}{\log\kappa}\Big)
	\qquad
	(k\to\infty).
	$$
	From this we easily deduce that
	$$
	\log\kappa = \tau - a_0 + O\Big(\frac{1}{\tau}\Big)
	\qquad
	(\tau\to\infty).
	$$
	This is \eqref{lem4.2:2} with $J=0$ and $b_0=-a_0$.
	Suppose that
	\begin{equation}\label{lem4.2:3}
		\tau = \log\kappa + \sum_{j=0}^{J+1} \frac{a_j}{(\log\kappa)^j} + O_J\bigg(\frac{1}{(\log\kappa)^{J+3}}\bigg)
		\qquad
		(k\to\infty).
	\end{equation}
	Clearly this implies \eqref{lem4.2:1}.
	Thus according to the hypothesis of recurrence, \eqref{lem4.2:2} holds.
	Using \eqref{lem4.2:2} and \eqref{lem4.2:3}; we can derive that
	\begin{align*}
		\log\kappa
		& = \tau - a_0
		+ \sum_{j=1}^{J+1} \frac{a_j}{(\log\kappa)^j} + O_J\bigg(\frac{1}{\tau^{J+2}}\bigg)
		\\
		& = \tau - a_0
		+ \sum_{j=1}^{J+1} \frac{a_j}{\tau^j}
		\bigg\{1 + \sum_{d=1}^{J+2-j} \frac{b_{d-1}}{\tau^d} + O_J\Big(\frac{1}{\tau^{J+2-j}}\Big)\bigg\}^{-j}
		+ O_J\bigg(\frac{1}{\tau^{J+2}}\bigg).
	\end{align*}
	This implies the required result via the Taylor development of $(1-t)^{-j}$.
\end{proof}

\subsection{Proof of Proposition \ref{prop5.1}}
Let $\varepsilon\in [c(\log\kappa)^{-J-1}, \, 9c(\log\kappa)^{-J-1}]$ be a parameter to be chosen later,
where $c$ is a large constant.
Without loss of generality, we can suppose
\begin{equation}\label{UB:varepsilon}
	\varepsilon\le (\log\kappa)^{-J},
	\qquad
	\varepsilon^2\le (\log\kappa)^{-J-1}
\end{equation}
for $k\ge k_0$,
where $\kappa_0=\kappa_0(c)$ is a constant depending $c$.
Put $K=\kappa {\rm e}^{\varepsilon}$.
Noticing that $(\frac{t}{\tau+\varepsilon})^{2K-2\kappa}\ge 1$ for $t\ge \tau+\varepsilon$, we have
$$
2\kappa\int_{\tau+\varepsilon}^{\infty}\Phi_T(t; y)t^{2\kappa-1} \d t
\le (\tau+\varepsilon)^{2\kappa -2K}\bigg(2K\int_0^{\infty}\Phi_T(t; y)t^{2K-1} \d t\bigg).
$$
From this and Lemma \ref{lem5.2}, we deduce that
\begin{equation}\label{4.5}
	\frac{\int_{\tau+\varepsilon}^{\infty}\Phi_T(t; y)t^{2\kappa-1} \d t}
	{\int_0^{\infty}\Phi_T(t; y)t^{2\kappa-1} \d t}
	\le \exp\bigg\{2(g(K, \tau)-g(\kappa, \tau)) + O_J\bigg(\frac{\kappa^2}{y} + \frac{\kappa}{(\log\kappa)^{J+3}}\bigg)\bigg\}
\end{equation}
uniformly for $(T, y, \kappa)$ in \eqref{(4.2)} above, where
\begin{equation}\label{def:gktau}
	g(\kappa, \tau)
	:= - \kappa\log\bigg(\frac{\tau+\varepsilon}{\log\kappa}\bigg)
	+ \frac{\kappa}{\log\kappa} \sum_{j=0}^{J+1} \frac{C_j}{(\log\kappa)^j}\cdot
\end{equation}
Let $\tau_0=\tau_0(c, J)$ be a suitable constant depending on $c$ and $J$.
For $\tau_0\le \tau\le {\log_2T-20}$, take $\kappa=\kappa_{\tau}$ such that
\begin{equation}\label{def:k-tau}
	\tau = \log\kappa + a_0 + \sum_{j=1}^{J+1} \frac{a_j}{(\log\kappa)^j},
\end{equation}
where the $a_j = a_j(C_0, \dots, C_j)$ are constants to be determined later.
Our choice of $\tau_0$ guarantees that $\tau\ge \tau_0\Rightarrow \kappa\ge \kappa_0$,
which guarantees that
\begin{equation}\label{bound:varepsilon}
	\varepsilon\le 9c (\log\kappa)^{-J-1}\le (\log\kappa)^{-J}
	\quad\text{and}\quad
	\varepsilon^2\le 81c^2 (\log\kappa)^{-2J-2}\le (\log\kappa)^{-2J-1}.
\end{equation}
These bounds will be used often and all implied constants in the $O$-symbol is independent of $c$.
In view of \eqref{def:k-tau} and the Taylor formula, we can write
\begin{align*}
	g(\kappa, \tau)
	& = - \kappa\log\bigg(1 + \frac{a_0+\varepsilon}{\log\kappa}
	+ \frac{1}{\log\kappa}\sum_{j=1}^{J+1} \frac{a_j}{(\log\kappa)^j}\bigg)
	+ \frac{\kappa}{\log\kappa} \sum_{j=0}^{J+1} \frac{C_j}{(\log\kappa)^j}
	\\
	& = - \kappa \bigg(\frac{a_0-C_0+\varepsilon}{\log\kappa}
	+ \frac{a_1-a_0^2-C_1+a_0\varepsilon}{(\log\kappa)^2}
	+ \sum_{j=2}^{J+1} \frac{a_j-a^{*}_j-C_j}{(\log\kappa)^{j+1}}
	\bigg)
	+ O_J\bigg(\frac{1}{(\log\kappa)^{J+3}}\bigg),
\end{align*}
where the $a^{*}_j=a^{*}_j(a_0, \dots, a_{j-1})$ are constants
depending on $a_0, \dots, a_{j-1}$.
Take
$$
a_0 = C_0+1,
\quad
a_1 = C_0^2+C_0+C_1+2,
\quad
a_j = a^{*}_j(a_0, \dots, a_{j-1})+C_j
\;\;
(2\le j\le J+1).
$$
Thus
$$
g(\kappa, \tau)
= - \kappa \bigg(\frac{1+\varepsilon}{\log\kappa} + \frac{a_1-a_0^2-C_1+a_0\varepsilon}{(\log\kappa)^2}\bigg)
+ O_J\bigg(\frac{\kappa}{(\log\kappa)^{J+3}}\bigg).
$$
Let
$$
\mathfrak{T}
:= \log K + a_0 + \sum_{j=1}^{J+1} \frac{a_j}{(\log K)^j},
$$
then
$$
g(K, \mathfrak{T})
= - K \bigg(\frac{1+\varepsilon}{\log K} + \frac{a_1-a_0^2-C_1+a_0\varepsilon}{(\log K)^2}\bigg)
+ O_J\bigg(\frac{\kappa}{(\log\kappa)^{J+3}}\bigg).
$$
From these, we easily deduce that
\begin{align*}
	g(K, \mathfrak{T}) - g(\kappa, \tau)
	& = - K \bigg(\frac{1+\varepsilon}{\log K}-\frac{1+\varepsilon}{\log\kappa}
	+ \frac{a_1-a_0^2-C_1+a_0\varepsilon}{(\log K)^2}-\frac{a_1-a_0^2-C_1+a_0\varepsilon}{(\log\kappa)^2}\bigg)
	\\
	& \quad
	- (K-\kappa) \bigg(\frac{1+\varepsilon}{\log\kappa} + \frac{a_1-a_0^2-C_1+a_0\varepsilon}{(\log\kappa)^2}\bigg)
	+ O_J\bigg(\frac{\kappa}{(\log\kappa)^{J+3}}\bigg).
\end{align*}
Using \eqref{bound:varepsilon}, a simple computation shows that

\begin{equation}\label{4.10B}
	\begin{aligned}
		g(K, \mathfrak{T}) - g(\kappa, \tau)
		& = K \bigg(\frac{(1+\varepsilon)\varepsilon}{(\log K)\log\kappa}
		+ \frac{(a_1-a_0^2-C_1+a_0\varepsilon)\varepsilon \log(K\kappa)}{(\log K)^2(\log\kappa)^2}\bigg)
		\\
		& \quad
		- (K-\kappa) \bigg(\frac{1+\varepsilon}{\log\kappa} + \frac{a_1-a_0^2-C_1+a_0\varepsilon}{(\log\kappa)^2}\bigg)
		+ O_J\bigg(\frac{\kappa}{(\log\kappa)^{J+3}}\bigg)
		\\\noalign{\vskip 0,5mm}
		& = \frac{{\rm e}^{\varepsilon}\varepsilon\kappa}{(\log\kappa)^2}
		- ({\rm e}^{\varepsilon}-1) \kappa \bigg(\frac{1+\varepsilon}{\log\kappa} + \frac{a_1-a_0^2-C_1}{(\log\kappa)^2}\bigg)
		+ O_J\bigg(\frac{\kappa}{(\log\kappa)^{J+3}}\bigg).
	\end{aligned}
\end{equation}
On the other hand, in view of \eqref{def:gktau}, we have
$$
\frac{\partial g}{\partial \tau}(\kappa, \tau)
= - \frac{\kappa}{\tau+\varepsilon}\cdot
$$
Thus we have, for some $\eta_{\kappa}\in (\tau, \mathfrak{T})$,
\begin{equation}\label{4.11B}
	\begin{aligned}
		g(K, \tau) - g(K, \mathfrak{T})
		& = \frac{\partial g}{\partial \tau}(K, \eta_{\kappa}) (\tau - \mathfrak{T})
		\le \frac{\varepsilon K}{\tau+\varepsilon} \bigg\{1+O_J\bigg(\frac{1}{(\log\kappa)^2}\bigg)\bigg\}
		\\
		& = \frac{\varepsilon {\rm e}^{\varepsilon}\kappa}{\log\kappa}
		\bigg(1-\frac{a_0}{\log\kappa}\bigg)
		+ O_J\bigg(\frac{\kappa}{(\log\kappa)^{J+3}}\bigg).
	\end{aligned}
\end{equation}
Writing
$$
g(K, \tau) - g(\kappa, \tau)
= g(K, \mathfrak{T}) - g(\kappa, \tau) + g(K, \tau) - g(K, \mathfrak{T})
$$
and using \eqref{4.10B} and \eqref{4.11B}, we can derive that
\begin{align*}
	& g(K, \tau) - g(k, \tau)
	\\
	& \le - \big((a_1-a_0^2-C_1)({\rm e}^{\varepsilon}-1)
	+ C_0 \varepsilon {\rm e}^{\varepsilon}\big) \frac{\kappa}{(\log\kappa)^2}
	- ({\rm e}^{\varepsilon}-1-\varepsilon) \frac{\kappa}{\log\kappa}
	+ O_J\bigg(\frac{\kappa}{(\log\kappa)^{J+3}}\bigg)
	\\
	& \le - (a_1-a_0^2-C_1 + C_0) \frac{\varepsilon \kappa}{(\log\kappa)^2}
	+ O_J\bigg(\frac{\kappa}{(\log\kappa)^{J+3}}\bigg)
	\\
	& = - \frac{\varepsilon \kappa}{(\log\kappa)^2}
	+ O_J\bigg(\frac{\kappa}{(\log\kappa)^{J+3}}\bigg),
\end{align*}
thanks to the choice of $a_1=a_0^2+C_1 - C_0 + 1=C_0^2+C_0+C_1+2$.
Thus the inequality \eqref{4.5} can be written as
$$
\frac{\int_{\tau+\varepsilon}^{\infty}\Phi_T(t; y)t^{2\kappa-1} \d t}
{\int_0^{\infty}\Phi_T(t; y)t^{2\kappa-1} \d t}
\le \exp\bigg\{-\frac{2\varepsilon \kappa}{(\log\kappa)^2}
+ O_J\bigg(\frac{\kappa^2}{y} + \frac{\kappa}{(\log\kappa)^{J+3}}\bigg)\bigg\}
$$
for $\tau_0\le \tau\le \log_2T-20$ and $\kappa=\kappa_\tau$.
This implies that
\begin{equation}\label{4.7}
	\int_{\tau+\varepsilon}^{\infty}\Phi_T(t; y)t^{2\kappa-1} \d t
	\le \frac{1}{4} \int_0^{\infty}\Phi_T(t; y)t^{2\kappa-1} \d t,
\end{equation}
provided the constant $c$ is convenably large and $y\ge \kappa(\log\kappa)^{J+3}$.
Similarly
\begin{equation}\label{4.8}
	\int_0^{\tau-\varepsilon} \Phi_T(t; y)t^{2\kappa-1} \d t
	\le \frac{1}{4} \int_0^{\infty}\Phi_T(t; y)t^{2\kappa-1} \d t.
\end{equation}
From \eqref{4.7} and \eqref{4.8}, we deduce that
$$
\frac{1}{2}\int_0^{\infty}\Phi_T(t; y)t^{2\kappa-1} \d t
\le \int_{\tau-\varepsilon}^{\tau+\varepsilon} \Phi_T(t; y)t^{2\kappa-1} \d t
\le \int_0^{\infty}\Phi_T(t; y)t^{2\kappa-1} \d t.
$$
Combining this with Lemma \ref{lem5.2} leads to
\begin{equation}\label{4.9}
\begin{aligned}
& \int_{\tau-\varepsilon}^{\tau+\varepsilon} \Phi_T(t; y)t^{2\kappa-1} \d t
\\
& = \frac{(\log\kappa)^{2\kappa}}{2\kappa}
	\exp\bigg(\frac{2\kappa}{\log\kappa}
	\bigg\{\sum_{j=0}^J \frac{C_j}{(\log\kappa)^j} + O_J\bigg(\frac{\kappa}{y}+\frac1{(\log \kappa)^{J+1}}\bigg)\bigg\}\bigg)
\end{aligned}
\end{equation}
uniformly for $(T, y, \kappa)$ in \eqref{(4.1)} above and \eqref{(4.2)}.

On the other hand, in view of the fact that $\Phi_T(t; y)$ is decreasing in $t$, we have
\begin{equation}\label{4.10}
	\Phi_T(\tau+\varepsilon; y) (\tau-\varepsilon)^{2\kappa-1}
	\le 2k\int_{\tau-\varepsilon}^{\tau+\varepsilon} \Phi_T(t; y)t^{2\kappa-1} \d t
	\le \Phi_T(\tau-\varepsilon; y) (\tau+\varepsilon)^{2\kappa-1}
\end{equation}
Since $\tau=\log\kappa+\sum_{j=0}^{J+1} a_j/(\log\kappa)^j$ and $\varepsilon\asymp (\log\kappa)^{-J-1}$,
it follows that
\begin{equation}\label{4.11}
	\begin{aligned}
		\bigg(\frac{\tau\pm\varepsilon}{\log\kappa}\bigg)^{-2k} \frac{\tau\pm\varepsilon}{2\kappa}
		& = \exp\bigg(\!\! -2\kappa\log\bigg\{1+\sum_{j=0}^J \frac{a_j}{(\log\kappa)^{j+1}}
		+ O\bigg(\frac{1}{(\log\kappa)^{J+3}}\bigg)\bigg\}\bigg)
		\\
		& = \exp\bigg(\!\! -\frac{2\kappa}{\log\kappa}\bigg\{a_0+\sum_{j=1}^J \frac{\widetilde{a}_j}{(\log\kappa)^j}
		+ O\bigg(\frac{1}{(\log\kappa)^{J+1}}\bigg)\bigg\}\bigg).
	\end{aligned}
\end{equation}
where the $\widetilde{a}_j$ are constants ($\widetilde{a}_1=a_1+a_0^2$).
From \eqref{4.9}--\eqref{4.11}, we can deduce that
$$
\Phi_T(\tau+\varepsilon;y)
\le \exp\bigg(-\frac{2\kappa}{\log\kappa}\bigg\{2+\sum_{j=1}^J \frac{\widetilde{a}_j-C_j}{(\log\kappa)^j}
+ O_J\bigg(\frac{\kappa}{y}+\frac{1}{(\log\kappa)^{J+1}}\bigg)\bigg\}\bigg)
\le\Phi_T(\tau-\varepsilon;y).
$$
Since $\tau=\log\kappa+\sum_{j=0}^J a_j/(\log\kappa)^j$,
we can apply Lemma \ref{lem4.2} to write
$$
\frac{2\kappa}{\log\kappa}
= \frac{2{\rm e}^{\tau-C_0-1 + \sum_{j=1}^J b_j/\tau^j}}{\tau+\sum_{j=0}^J b_j/\tau^j}
= \frac{2{\rm e}^{\tau-C_0-1}}{\tau}
\bigg\{1+\sum_{j=1}^J \frac{b_j'}{\tau^j} + O\bigg(\frac{1}{\tau^{J+1}}\bigg)\bigg\}
$$
and
$$
\sum_{j=1}^J \frac{\widetilde{a}_j-C_j}{(\log\kappa)^j}
= \sum_{j=1}^J \frac{\widetilde{a}_j-C_j}{(\tau+\sum_{\ell=0}^J b_{\ell}/\tau^{\ell})^j}
= \sum_{j=1}^J \frac{c_j}{\tau^j} + O\bigg(\frac{1}{\tau^{J+1}}\bigg).
$$
Combining \eqref{4.9}--\eqref{4.11}, we obtain
\begin{equation}\label{4.12}
	\Phi_T(\tau+\varepsilon;y)
	\le \exp\bigg(\!-\frac{2{\rm e}^{\tau-C_0-1}}{\tau}
	\bigg\{1+\sum_{j=1}^J \frac{\mathfrak{a}_j}{\tau^j}
	+ O_J\bigg(\frac{1}{\tau^{J+1}} + \frac{{\rm e}^{\tau}}{y}\bigg)\bigg\}\bigg)
	\le\Phi_T(\tau-\varepsilon;y)
\end{equation}
with
\begin{align*}
	\mathfrak{a}_1
	& = 2b_1'+c_1
	= 2(b_1-b_0)+\widetilde{a}_1-C_1
	= 2(-a_1+a_0)+a_1+a_0^2-C_1
	\\
	& = 2a_0 - a_1 + a_0^2 - C_1
	= 2a_0 - (a_0^2+C_1) + a_0^2 - C_1
	= 2a_0 - 2C_1
	= 2(1+C_0 - C_1).
\end{align*}

\subsection{End of the proof of Theorem \ref{thm1}}
By Lemma \ref{lem3}, we can derive that
\begin{equation}\label{4.13}
	\Phi_T(\tau)
	= \Phi_T(\tau+O(\varepsilon+\eta); y)
	+ O(\exp(-(\log T)/(50\log_2T))
\end{equation}
with $\eta:=\sqrt{(\log T)/y}$.
Combining \eqref{4.12} and \eqref{4.13}, we can obtain
\begin{align*}
& \Phi_T(\tau)
\\
& = \exp\bigg(\!-\frac{2{\rm e}^{\tau-C_0-1}}{\tau}
	\bigg\{1+\sum_{j=1}^J \frac{\mathfrak{a}_j}{\tau^j}
	+ O_J\bigg(\frac{1}{\tau^{J+1}} + \sqrt{\frac{\log T}{y}}\bigg)\bigg\}\bigg)
		+ O\bigg(\!\exp\bigg(\!-\frac{\log T}{50\log_2T}\bigg)\bigg).
\end{align*}
This implies the required result by choosing
$y = \min\{{(\log T) \tau^{2J+2},(\log T)^2/{\rm e}^{10+\tau}}\}$.
\hfill
$\square$

\vskip 5mm

\textbf{Acknowledgement}.
The author would like to thank professor Jie Wu, for his suggestion on exploring this subject and his generous help in overcoming some difficulties, and Leo Goldmakher for helpful remarks regarding the paper of Granville and Soundararajan \cite{GS06}. The author is also grateful to Bin Wei for carefully reading the manuscript and valuable advice, and Jinjiang Li for helping correct some typos. The author is supported by the China Scholarship Council (CSC) for his study in France. 

\vskip 5mm

\end{document}